\providecommand{\U}[1]{\protect\rule{.1in}{.1in}}
\newtheorem{theorem}{Theorem}
\theoremstyle{plain}
\newtheorem{conjecture}{Conjecture}
\newtheorem{corollary}{Corollary}
\newtheorem{lemma}{Lemma}
\newtheorem{proposition}{Proposition}
\newtheorem{remark}{Remark}
\numberwithin{equation}{section}
\begin{document}
\title[Multivariate generating functions]{Multivariate generating functions built of Chebyshev polynomials and some of
its applications and generalizations.}
\author{Pawe\l \ J. Szab\l owski}
\address{Emeritus in Department of Mathematics and Information Sciences,\\
Warsaw University of Technology\\
ul Koszykowa 75, 00-662 Warsaw, Poland}
\email{pawel.szablowski@gmail.com}
\thanks{The author is grateful to the unknown referee for his detailed and in-depth
remarks and suggestions.}
\date{January, 2018}
\subjclass[2000]{Primary 42C10, 33C47, Secondary 26B35 40B05}
\keywords{multivariate generating functions, Kibble-Slepian formula, Chebyshev
polynomials, q-Hermite polynomials, inversion of Poisson-Mehler formula}

\begin{abstract}
We sum multivariate generating functions composed of products of Chebyshev
polynomials of the first and the second kind. That is, we find closed forms of
expressions of the type $\sum_{j\geq0}\rho^{j}\prod_{m=1}^{k}T_{j+t_{m}}%
(x_{m})\prod_{m=k+1}^{n+k}U_{j+t_{m}}(x_{m}),$ for different integers $t_{m},$
$m=1,...,n+k.$ We also find a Kibble-Slepian formula of $n$ variables with
Hermite polynomials replaced by Chebyshev polynomials of the first or the
second kind. In all the considered cases, the obtained closed forms are
rational functions with positive denominators. We show how to apply the
obtained results to integrate some rational funtions or sum some related
series of Chebyshev polynomials. We hope that the obtained formulae will be
useful in the so-called free probability. We expect also that the obtained
results should inspire further research and generalizations. In particular,
that, following methods presented in this paper, one would be able to obtain
similar formulae for the so-called $q-$Hermite polynomials. Since the
Chebyshev polynomials of the second kind considered here are the $q$-Hermite
polynomials for $q=0$. We have applied these methods in the one- and
two-dimensional cases and were able to obtain nontrivial identities concerning
$q-$Hermite polynomials.

\end{abstract}
\maketitle

\section{Introduction}

In this work we obtain closed forms of the following expressions:

Case I. The multivariate generating functions:%
\begin{equation}
\chi_{k,n}^{(t_{1},...,t_{k+n})}(x_{1},...,x_{n+k}|\rho)=\sum_{j\geq0}\rho
^{j}\prod_{m=1}^{k}T_{j+t_{m}}(x_{m})\prod_{m=k+1}^{n+k}U_{j+t_{m}}(x_{m}),
\label{_ktnu}%
\end{equation}
where $\left\vert t_{m}\right\vert ,k,n\in\{0,1,...\},$ $k+n\geq1,$
$\left\vert \rho\right\vert <1,$ $\left\vert x_{m}\right\vert \leq1$ and
$T_{j},U_{j}$ denote $j-$th Chebyshev polynomials respectively of the first
and second kind.

Case II. The so-called Kibble--Slepian formula for Chebyshev polynomials i.e.
closed forms of the expressions:%
\begin{align}
f_{T}(\mathbf{x}|K_{n})  &  =\sum_{S}(\prod_{1\leq i<j\leq n}\left(  \rho
_{ij}\right)  ^{s_{ij}})\prod_{m=1}^{n}T_{\sigma_{m}}(x_{m}),\label{_t}\\
f_{U}(\mathbf{x}|K_{n})  &  =\sum_{S}(\prod_{1\leq i<j\leq n}\left(  \rho
_{ij}\right)  ^{s_{ij}})\prod_{m=1}^{n}U_{\sigma_{m}}(x_{m}), \label{_u}%
\end{align}
where $\mathbf{x\allowbreak=\allowbreak(}x_{1},...,x_{n})$. $K_{n}$ denotes
the symmetric, non-singular, $n\times n$ matrix with ones on its diagonal and
with $\rho_{ij}$ as its non-diagonal $ij-th$ entry. $\sum_{S}$ denotes
summation over all $n(n-1)/2$ non-diagonal entries of a symmetric $n\times n-$
matrix $S_{n}$ with zeros on the main diagonal and entries $s_{ij}$ being
nonnegative integers, while $\sigma_{m}$ is the sum of the entries $s_{ij}$
along the $m-th$ row of the matrix $S_{n}.$

We will show that in the case I. all functions $\chi_{k,n}$ are rational with
common denominator $w_{n+k}(x_{1},...,x_{k+n}|\rho)$ which is a symmetric
polynomial in $x_{1},...,x_{n+k}$ of degree $2^{n+k-1}$ as well as in $\rho$
of degree $2^{n+k}$ defined recursively by (\ref{rek}).

In case II. both functions $f_{T}(\mathbf{x}|K_{n})$ and $f_{U}(\mathbf{x}%
|K_{n})$ are rational with the same denominator
\begin{equation}
V_{n}(\mathbf{x|}K_{n})=\prod_{j=1}^{n-1}\prod_{k=j+1}^{n}w_{2}(x_{k}%
,x_{j}|\rho_{kj}), \label{mianKib}%
\end{equation}
where $w_{2}$ is defined by (\ref{w2}), below.

The fact that these functions are rational, is not very surprising, given the
fact that Chebyshev polynomials could be expressed by the trigonometric
functions and the fact that by the Euler formulae the series (\ref{_ktnu}),
(\ref{_t}) and (\ref{_u}) are sums of some geometric series. However, to get
the exact forms of the denominators and especially the numerators, is nontrivial.

Both statements will be proved in the sequel. The first one in the Section
\ref{one} and the second in the Section \ref{kibb}.

Chebyshev polynomials of the second kind (that are orthogonal with respect to
the semicircle distribution) have played a similar role in the rapidly
recently developing "free probability", as the Hermite polynomials (that are
orthogonal with respect to the normal distribution) play in classical
probability. This is so because the central role in the free probability is
played by the semicircle distribution, while in the classical one the central
role is played by the normal distribution. Hence the results presented below
are of significance for the free probability theory.

The possible other applications of the results of the paper can, for example,
help in the following:

\begin{enumerate}
\item To simplify calculations of some of the multiple integrals of the form
\[
\underset{k~fold}{\int...\int}\frac{v_{m}(x_{1},...,x_{n}|\mathbf{p})}%
{\Omega_{n}(x_{1},...,x_{n}|\mathbf{p})}\prod_{j=1}^{k}(1-x_{i}^{2})^{m_{j}%
/2}dx_{1}...dx_{k},
\]
where $v_{m}$ denotes some polynomial in variables $x_{1},...,x_{n}$ and
numbers $m_{j}\allowbreak\in\{-1,1\}$, $\mathbf{p}$\textbf{\ }denotes a set of
parameters. Thus, this set might be different in cases I. or II. $\Omega_{n}%
$\textbf{\ }is equal to $w_{n}$ in the case I, (see iterative formula
(\ref{rek})) or $V_{n}$ in the case II (see formula (\ref{mianKib})). This is
based on the observation that the closed forms in Case I and Case II are the
rational functions with the denominators of the form $\Omega_{n}$ while the
numerators are, depending on the case and on numbers $t_{m},$ $m\allowbreak
=\allowbreak1,\ldots,n,$ polynomials of degree at most $\sum_{m=1}^{n}%
(t_{m}+1).$ For example, for $n\allowbreak=\allowbreak2$ see Proposition
\ref{2wym}. Hence, one could imagine expanding $\frac{v_{m}(x_{1}%
,...,x_{n}|\mathbf{p})}{\Omega_{n}(x_{1},...,x_{n}|\mathbf{p})}$ into the
linear combinations of the series of the forms (\ref{_ktnu}), (\ref{_t}) or
(\ref{_u}) depending on the cases considered Case I or Case II. Now notice
that having an absolute uniform convergence of the appropriate series
($\left\vert \rho\right\vert ,$ $\left\vert \varrho_{ij}\right\vert <1$ and
$\left\vert T_{i}(x)\right\vert ,|U_{i}(x)|\leq i,$ $\left\vert x\right\vert
\leq1,i\geq0$) one can perform integrations of each summand separately, which
is very easy.

Below we present a few examples illustrating this idea. In the first three of
these examples we will use the fact that following Proposition \ref{2wym},
iii), the numerators of the functions $\chi_{0,2}^{0,0}(x,y,\rho)$ and
$\chi_{0,2}^{2,0}(x,y,\rho)$ are equal respectively
\[
1-\rho^{2}\text{ and }(4x^{2}-4xy-1+\rho^{2}).\text{ }%
\]
Thus for $\left\vert x\right\vert ,\left\vert y\right\vert \leq1$ and
$\left\vert \rho\right\vert <1$ we get%
\begin{equation}
\int_{-1}^{1}\frac{2(1-\rho^{2})\sqrt{1-y^{2}}dy}{\pi((1-\rho^{2})^{2}%
-4xy\rho(1+\rho^{2})+4\rho^{2}(x^{2}+y^{2}))}=1, \label{E0}%
\end{equation}
\begin{equation}
\int_{-1}^{1}\frac{2(4x^{2}-4xy-1+\rho^{2})\sqrt{1-y^{2}}dy}{\pi((1-\rho
^{2})^{2}-4xy\rho(1+\rho^{2})+4\rho^{2}(x^{2}+y^{2}))}=4x^{2}-1, \label{E1}%
\end{equation}
since $U_{2}(x)\allowbreak=\allowbreak4x^{2}-1.$ In the next example we use
the (\ref{00}) to sum
\begin{equation}
\sum_{j\geq0}\rho^{2j}U_{2j}(x)=\chi_{0,2}^{0,0}(x,0,i\rho)=\frac{1+\rho^{2}%
}{(1+\rho^{2})^{2}-4\rho^{2}x^{2}} \label{oddU}%
\end{equation}
and then (\ref{IU}) and the form of $\chi_{0,2}^{2,0}(x,y,\rho)$ to get the
following result :%

\begin{equation}
\int_{-1}^{1}\frac{(4x^{2}-4xy-1+\rho^{2})dy}{\pi\sqrt{1-y^{2}}((1-\rho
^{2})^{2}-4xy\rho(1+\rho^{2})+4\rho^{2}(x^{2}+y^{2}))}=\frac{4x^{2}-1-\rho
^{2}}{(1+\rho^{2})^{2}-4x^{2}\rho^{2}}. \label{E2}%
\end{equation}
In the example below, we used the fact that, following Proposition \ref{2wym},
iv), the numerator of the function $\chi_{1,1}^{1,0}(y,x,\rho)$ is equal to
$(y(1+\rho^{2})-2\rho x).$ Hence taking into account (\ref{IT}) and the fact
that $U_{1}(x)\allowbreak=\allowbreak2x$ we get:%

\begin{equation}
\int_{-1}^{1}\frac{2(y(1+\rho^{2})-2\rho x)\sqrt{1-y^{2}}dy}{\pi((1-\rho
^{2})^{2}-4xy\rho(1+\rho^{2})+4\rho^{2}(x^{2}+y^{2}))}=-\rho x. \label{E3}%
\end{equation}

The following two example exploit the form Corollary \ref{3wym},ii) and either
(\ref{oU})%

\begin{equation}
\frac{2}{\pi}\int_{-1}^{1}\frac{(1+\rho^{2})^{3}+16\rho^{3}xyz-4\rho
^{2}(1+\rho^{2})(x^{2}+y^{2}+z^{2})}{w_{3}(x,y,z|\rho)}\sqrt{1-z^{2}}dz=1,
\label{E4}%
\end{equation}
or (\ref{IU}) and then, of course, one of the formulae given in Proposition
\ref{2wym} to sum the obtained infinite series:%

\begin{gather}
\frac{1}{\pi}\int_{-1}^{1}\frac{(1+\rho^{2})^{3}+16\rho^{3}xyz-4\rho
^{2}(1+\rho^{2})(x^{2}+y^{2}+z^{2})}{\sqrt{1-z^{2}}w_{3}(x,y,z|\rho
)}dz\label{E5}\\
=\frac{(1-\rho^{2})^{3}+4\rho^{2}(1-\rho^{2})(x^{2}+y^{2})}{(1-\rho^{2}%
)^{4}+16\rho^{4}(x^{4}+y^{4})+8\rho^{2}(1-\rho^{2})^{2}(x^{2}+y^{2}%
)-16\rho^{2}(1+\rho^{4})x^{2}y^{2}},\nonumber
\end{gather}

we have here $w_{3}(x,y,z|\rho)$ is given by (\ref{w3}).

\item To derive several expansions of the type (\ref{_u}) and (\ref{_t}) for
the special choices of the parameters $x_{j}$. To illustrate this idea we have
the following examples:%
\begin{equation}
\sum_{j=0}^{\infty}(j+1)\rho^{j}U_{j}(x)U_{j}(y)=\frac{(1+\rho^{2})(1-\rho
^{2})^{2}-4\rho^{2}(1+\rho^{2})(x^{2}+y^{2})+16\rho^{3}xy}{((1-\rho^{2}%
)^{2}-4xy\rho(1+\rho^{2})+4\rho^{2}(x^{2}+y^{2}))^{2}}, \label{EE1}%
\end{equation}
\begin{gather}
\sum_{j\geq0}t^{j}T_{2j+1}(x)T_{2j+1}(y)=\label{EE2}\\
\frac{(1-t)xy(1+6t+t^{2}-4t(x^{2}+y^{2}))}{(1-t)^{4}+8t(1-t)^{2}(x^{2}%
+y^{2})-16t(1+t^{2})x^{2}y^{2}+16t^{2}(x^{4}+y^{4})}.\nonumber
\end{gather}
To get these identities we used formualae given in (\ref{00}), (\ref{T11}),
(\ref{U11}) as well as in Corollary \ref{3wym},

\item To obtain families of multivariate distributions in $\mathbb{R}^{n}$
with compact support of the form:%
\[
f_{n}(x_{1},...,x_{n})=\frac{p_{m}(x_{1},...,x_{n}|\mathbf{p})}{\Omega
_{n}(x_{1},...,x_{n}|\mathbf{p})}\prod_{j=1}^{n}(1-x_{i}^{2})^{m_{j}/2},
\]
where polynomial $p_{m}$ can depend on many parameters, can have any degree,
but must me positive on $\mathbf{S\allowbreak=\allowbreak}[-1,1]^{n}$ and such
that $f_{n}$ integrates to $1$ on $\mathbf{S,}$ indices $m_{j}\in\{-1,1\}.$
\end{enumerate}

There is one more reason for which the results are important. Namely, the
Chebyshev polynomials of the second kind are, as stated above, identical with
the so-called $q-$Hermite polynomials for $q\allowbreak=\allowbreak0$. Thus
the results of the paper can be an inspiration to obtain similar results for
the $q-$Hermite polynomials. All these ideas are explained and made more
precise in the sequence of observations, remarks, hypothesis and conjectures
presented in Section \ref{gen}.

An interesting, nontrivial example of an application of the method presented
in Theorem \ref{main} applied to the well-known cases and leading to the
non-obvious identities like the ones shown by (\ref{id2wym}), (\ref{sumk}) and
(\ref{00k}) is presented in Subsection \ref{Ident}.

The paper is organized as follows. In the next section we present some
elementary observations, we recall the basic properties of Chebyshev
polynomials as well as we prove some important auxiliary results. The main
results of the paper are presented in the two successive Sections \ref{one}
and \ref{kibb} presenting respectively closed forms of the one-parameter
multivariate generating functions and the closed form of the analogue of
Kibble--Slepian formula. The next Section \ref{gen} presents generalization,
observations, conjectures and examples. Finally the last Section \ref{dow}
contains longer proofs.

\section{Auxiliary results and elementary observations\label{pom}}

Let us recall (following \cite{Mason2003}), the definitions of the Chebyshev
polynomials:
\begin{equation}
U_{n}(\cos(\alpha))\allowbreak=\allowbreak\sin((n+1)\alpha)/\sin(\alpha)\text{
and }T_{n}(\cos(\alpha))\allowbreak=\allowbreak\cos(n\alpha) \label{Czebysz}%
\end{equation}
and the orthogonality relations they satisfy:
\begin{align}
\int_{-1}^{1}T_{i}(x)T_{j}(x)\frac{1}{\pi\sqrt{1-x^{2}}}dx  &  =\left\{
\begin{array}
[c]{ccc}%
0 & if & i\neq j\\
1/2 & if & i=j\neq0\\
1 & if & i=j=0
\end{array}
\right.  ,\label{oT}\\
\int_{-1}^{1}U_{i}(x)U_{j}(x)\frac{2}{\pi}\sqrt{1-x^{2}}dx\allowbreak &
=\allowbreak\left\{
\begin{array}
[c]{ccc}%
0 & if & i\neq j\\
1 & if & i=j
\end{array}
\right.  . \label{oU}%
\end{align}

We have also some simple properties of Chebyshev polynomials that were useful
in obtaining examples (\ref{E1}-\ref{E5}) and (\ref{EE1},\ref{EE2}):%
\begin{equation}
T_{j}(0)\allowbreak=\allowbreak U_{j}(0)=\left\{
\begin{array}
[c]{ccc}%
0 & if & j\text{ is odd}\\
(-1)^{j/2} & if & j\text{ is even}%
\end{array}
\right.  , \label{00}%
\end{equation}

\begin{gather}
T_{i}(1)=1,T_{j}(-1)=(-1)^{j-2\left\lfloor j/2\right\rfloor },\label{T11}\\
U_{j}(\pm1)=\pm(j+1), \label{U11}%
\end{gather}
for $j\geq0,$%

\begin{equation}
\int_{-1}^{1}T_{j}(x)\frac{2\sqrt{1-x^{2}}}{\pi}dx=\left\{
\begin{array}
[c]{ccc}%
1 & if & j=0\\
-1/2 & if & j=2\\
0 & if & j\notin\{0,2\}
\end{array}
\right.  , \label{IT}%
\end{equation}
and%
\begin{equation}
\int_{-1}^{1}U_{j}(x)\frac{1}{\pi\sqrt{1-x^{2}}}dx=\left\{
\begin{array}
[c]{ccc}%
0 & if & j\text{ is odd}\\
1 & if & j\text{ is even}%
\end{array}
\right.  . \label{IU}%
\end{equation}

In the sequel, if all integer parameters $t_{1},...,t_{n+k}$ will be equal to
zero, then they will be dropped from function $\chi$. Notice also that the
functions $\chi$ are known for $n\allowbreak=\allowbreak1$ and $n\allowbreak
=\allowbreak2$ and $t_{1}\allowbreak=\allowbreak0,$ $t_{2}\allowbreak
=\allowbreak0$. By (\ref{_ktnu}) we have:%
\begin{gather}
\chi_{0,1}(x|\rho)=\frac{1}{w_{1}(x|\rho)};\chi_{1,0}(x|\rho)=\frac{1-\rho
x}{w_{1}(x|\rho)},\label{_1}\\
\chi_{0,2}(x,y|\rho)\allowbreak=\allowbreak\sum_{n\geq0}\rho^{n}U_{n}%
(x)U_{n}(y)=\frac{1-\rho^{2}}{w_{2}(x,y|\rho)},\label{_2}\\
\chi_{2,0}(x,y|\rho)=\sum_{n\geq0}\rho^{n}T_{n}(x)T_{n}(y)=\frac{1-\rho
^{2}+2\rho^{2}\left(  x^{2}+y^{2}\right)  -\left(  \rho^{2}+3\right)  \rho
xy}{w_{2}(x,y|\rho)},\label{_3}\\
\chi_{1,1}(x,y|\rho)=\sum_{n\geq0}\rho^{n}U_{n}(x)T_{n}(y)=\frac{1-\rho
^{2}-2\rho xy+2\rho^{2}y^{2}\allowbreak}{w_{2}(x,y|\rho)}, \label{_4}%
\end{gather}
where:
\begin{align}
w_{1}(x|\rho)  &  =1-2\rho x+\rho^{2},\label{w1}\\
w_{2}(x,y|\rho)  &  =(1-\rho^{2})^{2}-4xy\rho(1+\rho^{2})+4\rho^{2}%
(x^{2}+y^{2}). \label{w2}%
\end{align}

Notice also that both $\chi_{2,0}$ and $\chi_{0,2}$ are positive on
$[-1,1]\times\lbrack-1,1].$ The formulae in (\ref{_1}) are well known within
e.g. theory of Poisson kernel. The formula in (\ref{_2}) it is famous
Poisson-Mehler formula for $q-$Hermite polynomials where we set $q=0.$ Both
can be found in \cite{Mason2003}. The second formula in (\ref{_3}) and in
(\ref{_4}) have been recently obtained in \cite{Szab-Cheb}.

To calculate the functions $\chi_{k,n}^{(t_{1},...,t_{k+n})}$ we need the
following auxiliary results. They are very simple, based on the elementary
properties of the trigonometric functions. We present them for the sake of the
completeness of the paper. We have:

\begin{proposition}%
\begin{equation}
w_{1}(\cos(\alpha+\beta)|\rho)w_{1}(\cos(\alpha-\beta)|\rho)\allowbreak
=w_{2}(\cos(\alpha),\cos(\beta)|\rho). \label{pro2}%
\end{equation}

\end{proposition}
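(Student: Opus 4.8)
The plan is to reduce the identity to elementary trigonometry by evaluating both factors on the left at $x=\cos\alpha$, $y=\cos\beta$. First I would set $A=1+\rho^2$, so that each factor reads $w_1(\cos(\alpha\pm\beta)|\rho)=A-2\rho\cos(\alpha\pm\beta)$. Multiplying them out gives
\begin{equation*}
A^2-2\rho A\bigl(\cos(\alpha+\beta)+\cos(\alpha-\beta)\bigr)+4\rho^2\cos(\alpha+\beta)\cos(\alpha-\beta),
\end{equation*}
so the whole problem reduces to expressing the two symmetric trigonometric combinations appearing here in terms of $x$ and $y$.

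Next I would apply two standard identities. The sum-to-product formula yields
\begin{equation*}
\cos(\alpha+\beta)+\cos(\alpha-\beta)=2\cos\alpha\cos\beta=2xy,
\end{equation*}
while the product-to-sum formula followed by the double-angle formula gives
\begin{equation*}
\cos(\alpha+\beta)\cos(\alpha-\beta)=\tfrac12\bigl(\cos2\alpha+\cos2\beta\bigr)=\cos^2\alpha+\cos^2\beta-1=x^2+y^2-1.
\end{equation*}
Substituting both into the expanded product turns the left-hand side into a polynomial in $x$, $y$ and $\rho$.

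Finally I would collect terms and read off the result. The cross term becomes $-4\rho(1+\rho^2)xy$, the quadratic term becomes $4\rho^2(x^2+y^2)$, and the remaining constant is $A^2-4\rho^2=(1+\rho^2)^2-4\rho^2=(1-\rho^2)^2$. Hence the product equals $(1-\rho^2)^2-4xy\rho(1+\rho^2)+4\rho^2(x^2+y^2)$, which is exactly $w_2(\cos\alpha,\cos\beta|\rho)$ by the definition (\ref{w2}).

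This computation presents no genuine obstacle; it is a routine verification. The only steps demanding a moment's attention are the evaluation of $\cos(\alpha+\beta)\cos(\alpha-\beta)$, where one must correctly chain the product-to-sum and double-angle identities to arrive at $x^2+y^2-1$, and the bookkeeping of the constant term, where the $-4\rho^2$ produced by that identity must be combined with $A^2=(1+\rho^2)^2$ to collapse into the factor $(1-\rho^2)^2$ that heads the definition of $w_2$.
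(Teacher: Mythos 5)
Your proof is correct and follows essentially the same route as the paper: expand the product of the two $w_1$ factors and apply the identities $\cos(\alpha+\beta)+\cos(\alpha-\beta)=2\cos\alpha\cos\beta$ and $\cos(\alpha+\beta)\cos(\alpha-\beta)=\cos^2\alpha+\cos^2\beta-1$, then collect terms to recover the definition of $w_2$.
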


\begin{proof}
We have
\begin{gather*}
(1-2\rho\cos(\alpha+\beta)+\rho^{2})((1-2\rho\cos(\alpha-\beta)+\rho
^{2})\allowbreak=\allowbreak\\
(1+\rho^{2})^{2}-2\rho(1+\rho^{2})(\cos(\alpha+\beta)+\cos(\alpha
-\beta))+4\rho^{2}\cos(\alpha+\beta)\cos(\alpha-\beta).
\end{gather*}
Now recall that $\cos(\alpha+\beta)+\cos(\alpha-\beta)\allowbreak
=\allowbreak2\cos(\alpha)\cos(\beta)$ and $\cos(\alpha+\beta)\cos(\alpha
-\beta)\allowbreak=\allowbreak\cos^{2}\alpha\allowbreak+\allowbreak\cos
^{2}\beta\allowbreak-\allowbreak1.$
\end{proof}

\begin{proposition}
\label{ilocz}%
\begin{gather}
\prod_{j=1}^{k}\cos(\alpha_{j})\allowbreak=\allowbreak\frac{1}{2^{n}}%
\sum_{i_{1}\in\{-1,1\}}...\sum_{i_{k}\in\{-1,1\}}\cos(\sum_{l=1}^{k}%
i_{l}\alpha_{l}),\label{cos}\\
\prod_{j=1}^{n}\sin(\alpha_{j})\allowbreak\prod_{j=n+1}^{n+k}\cos(\alpha
_{j})=\allowbreak\nonumber\\
\left\{
\begin{array}
[c]{ccc}%
\begin{array}
[c]{c}%
(-1)^{(n+1)/2}\frac{1}{2^{n+k}}\sum_{i_{1}\in\{-1,1\}}...\sum_{i_{n+k}%
\in\{-1,1\}}\\
(-1)^{\sum_{l=1}^{n}(i_{l}+1)/2}\sin(\sum_{l=1}^{n+k}i_{l}\alpha_{l})
\end{array}
& if & n\text{ is odd}\\%
\begin{array}
[c]{c}%
(-1)^{n/2}\frac{1}{2^{n+k}}\sum_{i_{1}\in\{-1,1\}}...\sum_{i_{n+k}\in
\{-1,1\}}\\
(-1)^{\sum_{l=1}^{n}(i_{l}+1)/2}\cos(\sum_{l=1}^{n+k}i_{l}\alpha_{l})
\end{array}
& if & n\text{ is even}%
\end{array}
\right.  . \label{sin}%
\end{gather}

\end{proposition}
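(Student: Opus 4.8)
The plan is to reduce both identities to a single expansion of a product of complex exponentials, using Euler's formulas $\cos \alpha =\tfrac{1}{2}(e^{i\alpha }+e^{-i\alpha })$ and $\sin \alpha =\tfrac{1}{2i}(e^{i\alpha }-e^{-i\alpha })$. First I would rewrite each factor in this form and multiply out the product over $j$. Every factor contributes one of the two exponentials $e^{\pm i\alpha _{j}}$, so the expansion is naturally indexed by a sign vector $(i_{1},\dots ,i_{n+k})\in \{-1,1\}^{n+k}$, with the $j$-th chosen exponent equal to $e^{i_{j}\alpha _{j}}$; collecting exponents gives $\exp (i\sum_{l}i_{l}\alpha _{l})$.

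For the cosine identity (\ref{cos}) this is immediate: there are no extra signs, the prefactor is exactly $1/2^{n}$, and the resulting sum $\sum_{i}\exp (i\sum_{l}i_{l}\alpha _{l})$ is invariant under the global sign flip $i\mapsto -i$, which merely permutes the sign vectors. Since the sum is then invariant under complex conjugation it is real, hence equals its own real part $\sum_{i}\cos (\sum_{l}i_{l}\alpha _{l})$, which is (\ref{cos}).

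The sine part requires tracking two extra ingredients. Each of the $n$ sine factors contributes the prefactor $1/(2i)$ together with a sign: choosing $e^{+i\alpha _{j}}$ gives $+1$ and choosing $e^{-i\alpha _{j}}$ gives $-1$, so the accumulated sign over the sine block is $(-1)^{\#\{l\leq n:\,i_{l}=-1\}}$. I would record this as $(-1)^{n}(-1)^{\sum_{l=1}^{n}(i_{l}+1)/2}$ to match the stated exponent convention (using that $(i_{l}+1)/2$ equals $1$ when $i_{l}=+1$ and $0$ when $i_{l}=-1$), and then combine the $(-1)^{n}$ with the factor $1/(2i)^{n}=1/(2^{n}i^{n})$; the powers of $i$ simplify via $(-1)^{n}/i^{n}=i^{n}$, leaving the clean prefactor $i^{n}/2^{n+k}$ in front of $\sum_{i}(-1)^{\sum (i_{l}+1)/2}\exp (i\sum_{l}i_{l}\alpha _{l})$.

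The decisive step, and the one to carry out with care, is again the global sign-flip symmetry $i\mapsto -i$, but now the sign factor is no longer invariant: flipping all signs replaces $\#\{l\leq n:\,i_{l}=+1\}$ by $\#\{l\leq n:\,i_{l}=-1\}$ and therefore multiplies the sign factor by $(-1)^{n}$. Pairing each sign vector with its negation, I would observe that for $n$ even the imaginary ($\sin$) contributions cancel while the real ($\cos$) parts survive, and for $n$ odd the reverse happens. Using $i^{n}=(-1)^{n/2}$ when $n$ is even and $i^{n+1}=(-1)^{(n+1)/2}$ when $n$ is odd, the surviving sum produces exactly the two cases of (\ref{sin}). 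The only genuine obstacle is the sign bookkeeping, namely matching $(-1)^{\#\{i_{l}=-1\}}$ to the paper's $(-1)^{\sum (i_{l}+1)/2}$ and keeping the parity-dependent powers of $i$ straight; I would therefore verify the small cases $n=1$ and $n=2$ as a check on the conventions before asserting the general formula.
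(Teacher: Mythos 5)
Your proposal is correct, but it takes a genuinely different route from the paper. The paper proves Proposition \ref{ilocz} by induction on the number of factors, using the product-to-sum identities $\cos\alpha\cos\beta=\tfrac12(\cos(\alpha-\beta)+\cos(\alpha+\beta))$ and $\sin\alpha\cos\beta=\tfrac12(\sin(\alpha-\beta)+\sin(\alpha+\beta))$ to append one factor at a time, treating the all-sine case first and then arguing (rather tersely) that mixing in cosine factors only doubles the number of terms without changing the sign pattern. You instead expand every factor at once via Euler's formulas, index the resulting $2^{n+k}$ terms by sign vectors, and extract the real or imaginary part by the global flip $i\mapsto-i$; the parity-dependent prefactors $(-1)^{n/2}$ and $(-1)^{(n+1)/2}$ then fall out uniformly as powers of $i$ rather than having to be verified separately in each inductive step. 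Your sign bookkeeping is right: the accumulated sign from the sine block is $(-1)^{\#\{l\le n:\,i_l=-1\}}=(-1)^n(-1)^{\sum_{l=1}^n(i_l+1)/2}$, the prefactor $(-1)^n/(2i)^n$ does simplify to $i^n/2^n$, and the conjugation symmetry gives $\overline{S}=(-1)^nS$ for the signed exponential sum $S$, which forces $S$ to be real for $n$ even and purely imaginary for $n$ odd, exactly reproducing the two cases of (\ref{sin}). The main advantage of your argument is that it is a single closed computation with the parity dichotomy emerging from one symmetry observation; the paper's induction is more elementary in its ingredients but leaves the general mixed case to a somewhat hand-waving final step, which your expansion handles explicitly.
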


\begin{proof}
See section \ref{dow}.
\end{proof}

\begin{lemma}
\label{aux1}Let us take $n\in\mathbb{N},$ $\left\vert \rho_{i}\right\vert <1,$
$\alpha_{i}\in\mathbb{R},$ $i\allowbreak\in S_{n}\allowbreak=\allowbreak
\{1,...,n\}.$ Let $M_{i,n}$ denote a subset of the set $S_{n}$ containing $i$
elements. Let us denote by $\sum_{M_{i,n}\subseteq S_{n}}$ summation over all
$M_{i,n}$ contained in $S_{n}.$ We have:%
\begin{gather}
\sum_{k_{1}\geq0}...\sum_{k_{n}\geq0}(\prod_{i=1}^{n}\rho_{i}^{k_{i}}%
)\cos(\beta+\sum_{i=1}^{n}k_{i}\alpha_{i})\allowbreak=\label{ksc}\\
\allowbreak\frac{\sum_{j=0}^{n}(-1)^{j}\sum_{M_{j,n}\subseteq S_{n}}%
(\prod_{k\in M_{j,n}}\rho_{k})\cos(\beta-\sum_{k\in M_{j,n}}\alpha_{k})}%
{\prod_{i=1}^{n}(1+\rho_{i}^{2}-2\rho_{i}\cos(\alpha_{i}))},\nonumber\\
\sum_{k_{1}\geq0}...\sum_{k_{n}\geq0}(\prod_{i=1}^{n}\rho_{i}^{k_{i}}%
)\sin(\beta+\sum_{i=1}^{n}k_{i}\alpha_{i})=\label{kss}\\
\frac{\sum_{j=0}^{n}(-1)^{j}\sum_{M_{j,n}\subseteq S_{n}}(\prod_{k\in M_{j,n}%
}\rho_{k})\sin(\beta-\sum_{k\in M_{j,n}}\alpha_{k})}{\prod_{i=1}^{n}%
(1+\rho_{i}^{2}-2\rho_{i}\cos(\alpha_{i}))}.\nonumber
\end{gather}

\end{lemma}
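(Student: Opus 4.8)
The plan is to prove the cosine identity \eqref{ksc} first and then obtain \eqref{kss} by an identical argument with $\cos$ replaced by $\sin$. The essential idea is to recognize each multiple sum as an iterated geometric-type series that can be resolved one variable at a time. To set this up cleanly, I would pass to complex exponentials: write $\cos(\theta)=\operatorname{Re}(e^{i\theta})$ and $\sin(\theta)=\operatorname{Im}(e^{i\theta})$, so that both identities follow from the single complex identity
\begin{equation*}
\sum_{k_{1}\geq 0}\cdots\sum_{k_{n}\geq 0}\Bigl(\prod_{i=1}^{n}\rho_{i}^{k_{i}}\Bigr)e^{i(\beta+\sum_{i}k_{i}\alpha_{i})}=\frac{e^{i\beta}}{\prod_{i=1}^{n}(1-\rho_{i}e^{i\alpha_{i}})}.
\end{equation*}
The left side is just a product of $n$ independent geometric series in the variables $z_{i}=\rho_{i}e^{i\alpha_{i}}$, each convergent since $|z_{i}|=|\rho_{i}|<1$, so this factorization is immediate.

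The next step is to put the denominator into real form. I would multiply each factor $1-\rho_{i}e^{i\alpha_{i}}$ by its conjugate $1-\rho_{i}e^{-i\alpha_{i}}$, giving
\begin{equation*}
(1-\rho_{i}e^{i\alpha_{i}})(1-\rho_{i}e^{-i\alpha_{i}})=1-2\rho_{i}\cos\alpha_{i}+\rho_{i}^{2},
\end{equation*}
which reproduces exactly the real denominator $\prod_{i=1}^{n}(1+\rho_{i}^{2}-2\rho_{i}\cos\alpha_{i})$ claimed in the statement. The cost of this rationalization is that the numerator becomes $e^{i\beta}\prod_{i=1}^{n}(1-\rho_{i}e^{-i\alpha_{i}})$, so the remaining work is to expand this product and extract its real and imaginary parts.

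Expanding $\prod_{i=1}^{n}(1-\rho_{i}e^{-i\alpha_{i}})$ by choosing, for each index $i$, either the term $1$ or the term $-\rho_{i}e^{-i\alpha_{i}}$, I would group the resulting $2^{n}$ terms according to the subset $M\subseteq S_{n}$ of indices where the second option is selected. The contribution of the subset $M$ is $(-1)^{|M|}\bigl(\prod_{k\in M}\rho_{k}\bigr)\exp\bigl(-i\sum_{k\in M}\alpha_{k}\bigr)$, so that
\begin{equation*}
e^{i\beta}\prod_{i=1}^{n}(1-\rho_{i}e^{-i\alpha_{i}})=\sum_{j=0}^{n}(-1)^{j}\sum_{M_{j,n}\subseteq S_{n}}\Bigl(\prod_{k\in M_{j,n}}\rho_{k}\Bigr)\exp\Bigl(i\bigl(\beta-\sum_{k\in M_{j,n}}\alpha_{k}\bigr)\Bigr),
\end{equation*}
after sorting subsets by their cardinality $j=|M|$. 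Taking real parts yields precisely the numerator of \eqref{ksc}, and taking imaginary parts yields the numerator of \eqref{kss}; dividing by the real denominator completes both identities simultaneously.

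I do not anticipate a serious obstacle: the convergence is guaranteed by $|\rho_{i}|<1$, the geometric-series factorization is routine, and the conjugate-multiplication trick is standard. The one point requiring genuine care is the bookkeeping in the final expansion—verifying that the sign $(-1)^{|M|}$, the product $\prod_{k\in M}\rho_{k}$, and the shifted phase $\beta-\sum_{k\in M}\alpha_{k}$ all match the stated closed form exactly when the $2^{n}$ monomials are reindexed by subset cardinality. This is the step where an error would most easily slip in, so I would confirm it directly for $n=1$ and $n=2$ against the given denominators \eqref{w1} before trusting the general pattern.
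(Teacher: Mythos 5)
Your proposal is correct and follows essentially the same route as the paper: both pass to complex exponentials, factor the multiple sum as a product of geometric series, rationalize each denominator factor by its conjugate to obtain $\prod_i(1+\rho_i^2-2\rho_i\cos\alpha_i)$, and expand the numerator product $e^{i\beta}\prod_i(1-\rho_i e^{-i\alpha_i})$ over subsets of indices. The only cosmetic difference is that you extract real and imaginary parts, whereas the paper writes $\cos$ and $\sin$ as half-sums and half-differences of the two conjugate exponential sums, which amounts to the same computation.
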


\begin{proof}
See section \ref{dow}.
\end{proof}

We will also need the following almost trivial special cases of formulae
(\ref{ksc}) and (\ref{kss}). We will formulate them as corollary.

\begin{corollary}
\label{suma}For all $\left\vert \rho\right\vert <1$ we have
\begin{align}
\sum_{n\geq0}\rho^{n}\sin(n\alpha+\beta)  &  =(\sin(\beta)-\rho\sin
(\beta-\alpha))/(1-2\rho\cos(\alpha)+\rho^{2}),\label{s_si}\\
\sum_{n\geq0}\rho^{n}\cos(n\alpha+\beta)  &  =(\cos(\beta)-\rho\cos
(\beta-\alpha)/(1-2\rho\cos(\alpha)+\rho^{2}). \label{s_g_c}%
\end{align}

\end{corollary}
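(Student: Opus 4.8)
The plan is to recognize Corollary~\ref{suma} as nothing more than the single-index special case of Lemma~\ref{aux1}, so that both formulae drop out by a direct substitution once that lemma is in hand. Concretely, I would set the dimension parameter in Lemma~\ref{aux1} equal to $1$ (so that $S_{n}$ becomes $S_{1}=\{1\}$), write $\rho _{1}=\rho $ and $\alpha _{1}=\alpha $, and rename the single summation index, so that the left-hand sides of (\ref{ksc}) and (\ref{kss}) become the series $\sum_{n\geq 0}\rho ^{n}\cos (n\alpha +\beta )$ and $\sum_{n\geq 0}\rho ^{n}\sin (n\alpha +\beta )$ appearing in (\ref{s_g_c}) and (\ref{s_si}).

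First I would check the denominator: the product $\prod_{i=1}^{1}(1+\rho _{i}^{2}-2\rho _{i}\cos \alpha _{i})$ collapses to $1-2\rho \cos \alpha +\rho ^{2}$, which is exactly the common denominator claimed in the corollary. Then I would evaluate the numerator by enumerating the only two subsets of $S_{1}=\{1\}$: the empty subset $M_{0,1}$ supplies the $j=0$ term $\cos \beta $ (respectively $\sin \beta $), and the full subset $M_{1,1}=\{1\}$ supplies the $j=1$ term, which carries the factor $(-1)^{1}\rho $ and the shifted argument, giving $-\rho \cos (\beta -\alpha )$ (respectively $-\rho \sin (\beta -\alpha )$). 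Summing these two contributions reproduces the numerators $\cos (\beta )-\rho \cos (\beta -\alpha )$ and $\sin (\beta )-\rho \sin (\beta -\alpha )$ verbatim, completing the identification.

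As an independent sanity check, and in case one wants a self-contained derivation not relying on Lemma~\ref{aux1}, I would run the elementary complex-exponential argument: since $|\rho e^{i\alpha }|=|\rho |<1$, the geometric series gives $\sum_{n\geq 0}\rho ^{n}e^{i(n\alpha +\beta )}=e^{i\beta }/(1-\rho e^{i\alpha })$, and multiplying numerator and denominator by the conjugate $1-\rho e^{-i\alpha }$ yields the real denominator $1-2\rho \cos \alpha +\rho ^{2}$ together with the numerator $e^{i\beta }-\rho e^{i(\beta -\alpha )}$. Separating real and imaginary parts then produces both formulae simultaneously.

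I do not expect any genuine obstacle here, since the statement is a pure specialization. The only point requiring care is bookkeeping of the sign and the shifted argument in the $j=1$ summand of Lemma~\ref{aux1} (the minus sign in $\beta -\sum \alpha _{k}$ and the factor $(-1)^{j}$), so I would verify that these conventions are matched exactly rather than introducing a spurious sign; the complex-exponential route serves precisely to confirm that matching.
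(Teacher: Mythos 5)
Your proposal matches the paper's own proof, which likewise obtains the corollary by setting $n=1$ and $\alpha=\alpha_{1}$ in (\ref{ksc}) and (\ref{kss}) of Lemma \ref{aux1}; your explicit enumeration of the two subsets and the complex-exponential sanity check are just a more careful writeup of that same specialization. No gaps.
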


\begin{proof}
Set $n\allowbreak=\allowbreak1$ and $\alpha\allowbreak=\allowbreak\alpha_{1}$
(\ref{kss}) and (\ref{ksc}).
\end{proof}

\section{One parameter sums. Multivariate generating functions of Chebyshev
polynomials\label{one}}

The theorem below is obtained by very elementary methods. Given the definition
of the function $\chi_{k,n}^{(t_{1},...,t_{n+k})}(x_{1},...,x_{n+k}|\rho)$
presented by (\ref{_ktnu}) it is obvious that it must be in the form of a
rational function. Even many properties of the denominator of these functions
can be more or less deduced from the definition. However the exact forms of
the numerators of these functions are not trivial. For the sake of
completeness of the paper, we present all these trivial and nontrivial
observations in one theorem.

\begin{theorem}
\label{main}For all integers $n,k\geq0,$ $\left\vert x_{s}\right\vert
<1,t_{s}\in\mathbb{Z},$ $s\allowbreak=\allowbreak1,...,n+k,$ we have:%
\begin{equation}
\chi_{k,n}^{(t_{1},...,t_{n+k})}(x_{1},...,x_{n+k}|\rho)=\allowbreak
\frac{l_{k,n}^{(t_{1},...,t_{n+k})}(x_{1},...,x_{n+k}|\rho)}{w_{n+k}%
(x_{1},...,x_{n+k}|\rho)}, \label{formula}%
\end{equation}
where $w_{m}(x_{1},...,x_{m}|q)$ is a symmetric polynomial of degree $2^{m-1}$
in $x_{1},...,x_{m}$ and of degree $2^{m}$ in $\rho$ defined by the following
recurrence :%
\begin{gather}
w_{m+1}(x_{1},...,x_{m-1},\cos(\alpha),\cos(\beta)|\rho)=\label{rek}\\
w_{m}(x_{1},...,x_{m-1},\cos(\alpha+\beta)|\rho)w_{m}(x_{1},...,x_{m-1}%
,\cos(\alpha-\beta)|\rho),\nonumber
\end{gather}
$n\geq1$, with $w_{1}(x|q)$ given by (\ref{w1}).

$l_{n,k}^{(t_{1},\ldots,t_{n+k})}(x_{1},...,x_{n+k}|\rho)$ is another
polynomial given by the relationship:%
\begin{gather}
l_{k,n}^{(t_{1},...,t_{n+k})}(x_{1},...,x_{n+k}|\rho)=\label{diff}\\
\sum_{j=0}^{2^{n+k}-1}\rho^{j}\sum_{m=0}^{j}\frac{1}{m!}\left.  \frac{d^{m}%
}{d\rho^{m}}w_{k+n}(x_{1},...,x_{k+n}|\rho)\right\vert _{\rho=0}\nonumber\\
\times\prod_{s=1}^{k}T_{(j-m)+t_{s}}(x_{s})\prod_{s=1+k}^{n+k}U_{(j-m)+t_{s}%
}(x_{s}).
\end{gather}

\end{theorem}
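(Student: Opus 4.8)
The plan is to establish the two claims — the recurrence/product structure for the denominator $w_{m}$ and the explicit formula for the numerator $l_{k,n}$ — by converting the Chebyshev products into trigonometric sums and then applying the geometric summation of Lemma \ref{aux1}. The substitution $x_{s}=\cos\theta_{s}$ together with the identities \eqref{Czebysz} turns the product $\prod_{s=1}^{k}T_{j+t_{s}}(x_{s})\prod_{s=k+1}^{n+k}U_{j+t_{s}}(x_{s})$ into a product of $\cos((j+t_{s})\theta_{s})$ and $\sin((j+t_{s}+1)\theta_{s})/\sin\theta_{s}$. Clearing the factors $\sin\theta_{s}$ for $s>k$ into a common denominator $\prod_{s=k+1}^{n+k}\sin\theta_{s}$, the summand becomes a product of cosines and sines to which Proposition \ref{ilocz} applies, rewriting it as a single $2^{-(n+k)}$-weighted sum of terms $\cos(\beta+j\sum_{s}i_{s}\theta_{s})$ and $\sin(\cdots)$ over all sign vectors $(i_{1},\dots,i_{n+k})\in\{-1,1\}^{n+k}$, where $\beta$ collects the $t_{s}$-dependent phase shifts.

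\textbf{Summation step.} With the summand in this trigonometric form, the dependence on $j$ appears only through a single linear phase $j\Phi$ with $\Phi=\sum_{s}i_{s}\theta_{s}$, so I would apply Corollary \ref{suma} (the $n=1$ case of Lemma \ref{aux1}, with $\rho_{1}=\rho$ and $\alpha=\Phi$) to each of the $2^{n+k}$ sign-sector sums over $j\geq 0$. Each such sum produces a rational function whose denominator is $1-2\rho\cos(\sum_{s}i_{s}\theta_{s})+\rho^{2}=w_{1}(\cos(\sum_{s}i_{s}\theta_{s})\,|\,\rho)$. Collecting the $2^{n+k}$ sectors and using the sign symmetry $i_{s}\leftrightarrow -i_{s}$ (which pairs $\Phi$ with $-\Phi$ and leaves $\cos\Phi$ fixed), the distinct denominators are indexed by the $2^{n+k-1}$ sign classes modulo global negation; putting everything over a common denominator gives exactly
\[
\prod_{(i_{1},\dots,i_{n+k})}w_{1}\!\left(\cos\!\Big(\textstyle\sum_{s}i_{s}\theta_{s}\Big)\,\middle|\,\rho\right),
\]
a product of $2^{n+k-1}$ factors. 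This product is symmetric in the $x_{s}$ and has degree $2^{n+k-1}$ in each $x_{s}$ and degree $2\cdot 2^{n+k-1}=2^{n+k}$ in $\rho$, matching the stated orders; identifying it with $w_{n+k}$ defined by the doubling recurrence \eqref{rek} is precisely Proposition \ref{ilocz}'s product lemma \eqref{pro2} applied iteratively, since pairing the two sign choices of the last index $i_{n+k}$ and invoking \eqref{pro2} converts $w_{m}(\cdots,\cos(\alpha+\beta))\,w_{m}(\cdots,\cos(\alpha-\beta))$ into $w_{m+1}$.

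\textbf{The numerator and the hard part.} Given the denominator is $w_{n+k}$, the numerator $l_{k,n}$ is determined by $l_{k,n}=w_{n+k}\cdot\chi_{k,n}$, and since $\chi_{k,n}$ is a power series in $\rho$ and $w_{n+k}$ is a polynomial of degree $2^{n+k}$ in $\rho$, the product is again a power series whose low-order coefficients are finite convolutions; formula \eqref{diff} is just this Cauchy product written out, with $\frac{1}{m!}\frac{d^{m}}{d\rho^{m}}w_{k+n}|_{\rho=0}$ being the $\rho^{m}$-coefficient of $w_{n+k}$ and the Chebyshev factors supplying the $\rho^{j-m}$-coefficient of $\chi_{k,n}$. \textbf{The main obstacle} I anticipate is verifying that this numerator truncates at degree $2^{n+k}-1$ in $\rho$ — that is, that $l_{k,n}$ is genuinely a polynomial rather than a power series, so that the upper summation limit $2^{n+k}-1$ in \eqref{diff} is correct. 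This requires showing that all Cauchy-product coefficients of $w_{n+k}\cdot\chi_{k,n}$ at orders $\rho^{j}$ with $j\geq 2^{n+k}$ vanish, which is exactly the assertion that $\chi_{k,n}=l_{k,n}/w_{n+k}$ with $\deg_{\rho}l_{k,n}<\deg_{\rho}w_{n+k}$; this follows from the explicit rational form obtained in the summation step, so the polynomiality is not separately assumed but emerges once the closed form is in hand. The remaining care is purely bookkeeping: tracking the phase $\beta$ through Proposition \ref{ilocz} to confirm the $t_{s}$-shifts enter correctly, and confirming the $\sin\theta_{s}$ denominators from the $U$-factors cancel against matching $\sin$ factors produced by the second case of \eqref{sin}.
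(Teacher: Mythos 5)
Your proposal follows essentially the same route as the paper: reduce to trigonometric form via (\ref{Czebysz}), linearize the products of sines and cosines with Proposition \ref{ilocz}, sum each sign-sector geometrically via Corollary \ref{suma} to read off the common denominator as a product of $w_{1}$-factors identified with $w_{n+k}$ through (\ref{pro2}) and the recurrence (\ref{rek}), and then recover the numerator as the truncating Cauchy product $w_{n+k}\cdot\chi_{k,n}$, which is exactly what the paper's Taylor--Leibniz derivation of (\ref{diff}) amounts to. The argument is correct; if anything, your explicit count of $2^{n+k-1}$ sign classes modulo global negation is slightly more careful than the paper's formula (\ref{wkn}).
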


\begin{proof}
See section \ref{dow}.
\end{proof}

\begin{corollary}
Theorem \ref{main} provides for free the following important set of identities
involving Chebyshev polynomial of the first and the second kind. Namely we
have: $\forall n,k\geq0:n+k\geq1,\forall t_{1},\ldots,t_{n+k}\geq0,\forall
j\geq2^{n+k},\forall(x_{1},\ldots,x_{k+n})\in(-1,1)^{n+k}$%
\begin{equation}
\sum_{m=0}^{j}\frac{1}{m!}\left.  \frac{d^{m}}{d\rho^{m}}w_{k+n}%
(x_{1},...,x_{k+n}|\rho)\right\vert _{\rho=0}\times\prod_{s=1}^{k}%
T_{(j-m)+t_{s}}(x_{s})\prod_{s=1+k}^{n+k}U_{(j-m)+t_{s}}(x_{s})=0.
\label{identities}%
\end{equation}

In particular we have for $n+k\allowbreak=\allowbreak1:$
\[
U_{k}(x)-2xU_{k+1}(x)+U_{k+2}(x)=0,
\]
which is nothing else as the well-known three-term recurrence satisfied by the
Chebyshev polynomials. However for say $k=0$ and $n\allowbreak=\allowbreak2$
we get for all $s,m\geq0$%
\begin{gather*}
-4xyU_{s}(y)U_{m}(x)+2(2x^{2}+2y^{2}-1)U_{s+1}(y)U_{m+1}(x)\\
-4xyU_{s+2}(y)U_{m+3}(x)+U_{s+3}(y)U_{m+3}(x)=0,
\end{gather*}
which is, to my knowledge, unknown.
\end{corollary}

\begin{proof}
Since $l_{k,n}^{(t_{1},...,t_{n+k})}(x_{1},...,x_{n+k}|\rho)$ is a polynomial
of degree $2^{k+n}-1$ in $\rho$ all its derivatives with respect to $\rho$ of
higher than $2^{k+n}-1$ should be equal to zero.
\end{proof}

\begin{corollary}
For $n\geq1,$ after swapping $x_{1}$ and $x_{n},$ taking $\beta\allowbreak
=\allowbreak0,$ $\cos(\alpha)\allowbreak=\allowbreak x_{2}$ we get:%
\[
w_{n}(1,...x_{n-1},x_{n}|\rho)=(w_{n-1}(x_{2},...x_{n}|\rho))^{2}.
\]
In particular
\[
w_{3}(x_{1},\cos(\alpha_{2}),\cos(\alpha_{3})|\rho)=w_{2}(x_{1},\cos
(\alpha_{3}+\alpha_{2})|\rho)w_{2}(x_{1},\cos(\alpha_{3}-\alpha_{2})|\rho),
\]
which, after replacing $\cos(\alpha_{2})$ by $x_{2}$ and $\cos(\alpha_{3})$ by
$x_{3}$ and with the help of Mathematica, yields:%
\begin{gather}
w_{3}(x_{1},x_{2},x_{3}|\rho)=16\rho^{4}(x_{1}^{4}+x_{2}^{4}+x_{3}^{4}%
)-8\rho^{2}(1+\rho^{2})^{2}(x_{1}^{2}+x_{2}^{2}+x_{3}^{2})\label{w3}\\
+16\rho^{2}(1+\rho^{4})(x_{1}^{2}x_{2}^{2}+x_{1}^{2}x_{3}^{2}+x_{2}^{2}%
x_{3}^{2})+64\rho^{4}x_{1}^{2}x_{2}^{2}x_{3}^{2}-32\rho^{3}(1+\rho^{2}%
)x_{1}x_{2}x_{3}(x_{1}^{2}+x_{2}^{2}+x_{3}^{2})\nonumber\\
-8\rho(1+\rho^{2})(1+\rho^{4}-6\rho^{2})x_{1}x_{2}x_{3}+(1+\rho^{2}%
)^{4}.\nonumber
\end{gather}

\end{corollary}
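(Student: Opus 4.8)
The plan is to prove the first identity $w_{n}(1,x_{2},\dots,x_{n}|\rho)=(w_{n-1}(x_{2},\dots,x_{n}|\rho))^{2}$ by induction on $n$, using only the defining recurrence (\ref{rek}) and a one-line base computation, and then to read off the displayed $w_{3}$ formula as the first nontrivial instance of the same recurrence. Conceptually, the identity holds because unfolding (\ref{rek}) together with (\ref{pro2}) writes $w_{n}(\cos\theta_{1},\dots,\cos\theta_{n}|\rho)$ as the product $\prod w_{1}(\cos(\theta_{1}+\sum_{i=2}^{n}\epsilon_{i}\theta_{i})|\rho)$ over all sign vectors $(\epsilon_{2},\dots,\epsilon_{n})\in\{-1,1\}^{n-1}$; setting $\theta_{1}=0$ (that is, $x_{1}=1$) makes the product invariant under the global sign flip $\epsilon_{i}\mapsto-\epsilon_{i}$, which splits it into two identical halves, each equal to $w_{n-1}$. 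I will present the self-contained induction rather than this product representation, since the former needs nothing beyond (\ref{rek}).

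For the base case $n=2$, substitute $x_{1}=1$ into (\ref{w2}) and simplify: $w_{2}(1,x_{2}|\rho)=(1+\rho^{2})^{2}-4\rho x_{2}(1+\rho^{2})+4\rho^{2}x_{2}^{2}=(1-2\rho x_{2}+\rho^{2})^{2}=(w_{1}(x_{2}|\rho))^{2}$ by (\ref{w1}). For the inductive step, assume the identity holds at level $n-1$ as a polynomial identity in all its arguments. Writing $x_{n-1}=\cos\alpha$ and $x_{n}=\cos\beta$ and applying (\ref{rek}) to $w_{n}$, which combines its last two slots, gives
\begin{align*}
w_{n}(1,x_{2},\dots,x_{n-2},\cos\alpha,\cos\beta|\rho)
&=w_{n-1}(1,x_{2},\dots,x_{n-2},\cos(\alpha+\beta)|\rho) \\
&\quad\times w_{n-1}(1,x_{2},\dots,x_{n-2},\cos(\alpha-\beta)|\rho).
\end{align*}
Each factor on the right has first argument $1$, so the induction hypothesis turns it into the square of a $w_{n-2}$, yielding $(w_{n-2}(x_{2},\dots,x_{n-2},\cos(\alpha+\beta)|\rho))^{2}(w_{n-2}(x_{2},\dots,x_{n-2},\cos(\alpha-\beta)|\rho))^{2}$. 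Applying (\ref{rek}) instead to $w_{n-1}(x_{2},\dots,x_{n}|\rho)$ and squaring the result produces exactly this same expression, which completes the induction.

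Finally, the displayed $w_{3}$ identity is the case $m=2$ of (\ref{rek}), read with $x_{2}=\cos\alpha$ and $x_{3}=\cos\beta$ and with $x_{1}$ frozen in the earlier slot; substituting the closed form (\ref{w2}) for each $w_{2}$ factor and expanding — the routine algebra the authors delegate to Mathematica — gives (\ref{w3}). I expect no genuine obstacle beyond bookkeeping: the entire substance lies in matching the two applications of (\ref{rek}) (one to $w_{n}$, one to $w_{n-1}$), and the only subtlety is that the recurrence always acts on the last two variables while the frozen argument $1$ is carried untouched in the first slot throughout every step — which is precisely why the corollary is stated with the $1$ in position one rather than an arbitrary slot (the latter would additionally require the symmetry of $w_{n}$).
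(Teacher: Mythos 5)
Your proof is correct. Note first that the paper supplies no proof of this corollary at all: it is stated bare, as an immediate consequence of the machinery set up in the proof of Theorem \ref{main}. The argument the author evidently has in mind is the one you sketch in your opening paragraph and then set aside, namely the product representation (\ref{wkn}), $w_{n}(\cos\alpha_{1},\dots,\cos\alpha_{n}|\rho)=\prod_{\epsilon}(1-2\rho\cos(\alpha_{1}+\sum_{s\geq 2}\epsilon_{s}\alpha_{s})+\rho^{2})$: setting $\alpha_{1}=0$ pairs each sign vector with its negative via the evenness of cosine and the product collapses to $(w_{n-1})^{2}$ in one line. Your self-contained induction on (\ref{rek}) is a genuinely different route and arguably the safer one, since (\ref{wkn}) as literally printed (a product over \emph{all} $2^{n}$ sign vectors) is off by a square from the $w_{n}$ of the theorem statement, whereas your argument touches only the recurrence itself; the price is that you must run the two applications of (\ref{rek}) in parallel, which you do correctly, and you rightly observe that keeping the frozen argument $1$ in the first slot is what lets you avoid invoking the (asserted but not separately proved) symmetry of $w_{n}$. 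Three small points: the second factor in (\ref{rek}) as printed should read $\cos(\alpha-\beta)$, a typo you silently and correctly repair; the identity as stated for ``$n\geq 1$'' only makes sense for $n\geq 2$ since no $w_{0}$ is defined, and your base case $n=2$ is the right starting point; and your base computation tacitly uses $(1-\rho^{2})^{2}+4\rho^{2}=(1+\rho^{2})^{2}$, which is worth one explicit line. The passage from the ``in particular'' display to (\ref{w3}) is, as you say, pure expansion of (\ref{w2}), which the paper also delegates to Mathematica.
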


\begin{remark}
Notice that from Theorem \ref{main} we deduce that for all integers
$t_{1},...,t_{k+n}$ the ratio
\[
\frac{\chi_{k,n}^{(t_{1},...,t_{k+n})}(x_{1},...,x_{n+k}|\rho)}{\chi
_{k,n}^{(0,...,0)}(x_{1},...,x_{n+k}|\rho)}%
\]
is a rational function of arguments $x_{1},...,x_{n+k},\rho.$

Such observation for was first made by Carlitz for $k+n\allowbreak
=\allowbreak2,$ nonnegative integers $t_{1}$ and $t_{2}$ concerning the
so-called Rogers--Szeg\"{o} polynomials and two variables $x_{1}$ and $x_{2}$
in \cite{Carlitz72} (formula 1.4). Later it was generalized by Szab\l owski in
\cite{Szab5} for the so-called $q-$Hermite polynomials, also for the two
variables . Now, it turns out that for $q\allowbreak=\allowbreak0$ the
$q-$Hermite polynomials are equal to Chebyshev polynomials of the second kind,
hence one can state that so far the above-mentioned observation was known for
$k\allowbreak=\allowbreak0$ and $n\allowbreak=\allowbreak2.$ Hence we deal
with far-reaching generalization both in the number of variables as well as
for the Chebyshev polynomials of the first kind.
\end{remark}

\begin{corollary}
\label{gest}For $\left\vert x_{i}\right\vert \leq1$ and $\left\vert
\rho\right\vert <1,$ $n\geq1:$%
\begin{gather*}
\chi_{n,0}(x_{1},...,x_{n}|\rho)\geq0,\\
\underset{j\text{ fold}}{\int_{-1}^{1}...\int_{-1}^{1}(}\prod_{s=1}^{n}%
\frac{1}{\pi\sqrt{1-x_{s}^{2}}})\chi_{n,0}(x_{1},...,x_{n}|\rho)dx_{1}%
...dx_{j}\allowbreak=\allowbreak\prod_{s=j+1}^{n}\frac{1}{\pi\sqrt{1-x_{s}%
^{2}}},
\end{gather*}
for $j=1,...,n.$
\end{corollary}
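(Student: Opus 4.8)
The plan is to treat the two assertions separately, disposing of the integral identity first (which follows directly from orthogonality) and then establishing the positivity, which is the substantive part.

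For the integral identity I would insert the series definition $\chi_{n,0}(x_1,\dots,x_n|\rho)=\sum_{m\ge 0}\rho^m\prod_{s=1}^n T_m(x_s)$ and integrate term by term over $x_1,\dots,x_j$. The interchange of $\sum$ and $\int$ is legitimate because $|\rho^m\prod_s T_m(x_s)|\le|\rho|^m$ on $\mathbf{S}$, a summable bound independent of the $x_s$ (Weierstrass $M$-test). The $n-j$ factors $\prod_{s=j+1}^n(\pi\sqrt{1-x_s^2})^{-1}$ do not involve the integration variables and pull out of the integral; each of the remaining $j$ integrals is $\int_{-1}^1 T_m(x_s)(\pi\sqrt{1-x_s^2})^{-1}dx_s$, which by the orthogonality relation recalled above (with $T_0\equiv1$) equals $\delta_{m,0}$. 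The product of these $j$ Kronecker deltas collapses the sum to its $m=0$ term, leaving $\rho^0\prod_{s=j+1}^nT_0(x_s)=1$; restoring the factored-out weights yields $\prod_{s=j+1}^n(\pi\sqrt{1-x_s^2})^{-1}$, as claimed.

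For the positivity I would pass to angular variables $x_s=\cos\theta_s$, use $T_m(\cos\theta_s)=\cos(m\theta_s)$, and apply the product-to-sum identity \eqref{cos} of Proposition \ref{ilocz} to obtain $\chi_{n,0}=2^{-n}\sum_{\mathbf i\in\{-1,1\}^n}\sum_{m\ge0}\rho^m\cos\!\big(m\sum_{s=1}^n i_s\theta_s\big)$. Summing the inner series by Corollary \ref{suma} (formula \eqref{s_g_c} with $\beta=0$) produces, for each sign pattern $\mathbf i$, the term $\big(1-\rho\cos\alpha_{\mathbf i}\big)/\big(1-2\rho\cos\alpha_{\mathbf i}+\rho^2\big)$ with $\alpha_{\mathbf i}=\sum_s i_s\theta_s$. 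The denominator equals $w_1(\cos\alpha_{\mathbf i}|\rho)=(1-\rho\cos\alpha_{\mathbf i})^2+\rho^2\sin^2\alpha_{\mathbf i}>0$ for $|\rho|<1$, while the numerator $1-\rho\cos\alpha_{\mathbf i}>0$ since $|\rho\cos\alpha_{\mathbf i}|<1$. Thus every one of the $2^n$ summands is strictly positive, whence $\chi_{n,0}\ge0$.

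The only real difficulty is bookkeeping rather than conceptual. One must confirm that the finite expansion \eqref{cos} may be exchanged with the infinite sum over $m$ (again guaranteed by the $|\rho|^m$ bound) before summing each of the $2^n$ cosine series, and one should verify the boundary cases $\cos\alpha_{\mathbf i}=\pm1$ (arising when some $x_s=\pm1$) separately, where the term reduces to $(1\mp\rho)^{-1}>0$ so that positivity persists. Notably, no step requires the explicit closed forms of $w_{n+k}$ or $l_{k,n}$ from Theorem \ref{main}; the two elementary summation results of Section \ref{pom} together with the orthogonality relations suffice.
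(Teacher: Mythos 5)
Your proposal is correct and follows essentially the same route as the paper: the integral identity by term-by-term integration against the Chebyshev weight, and the positivity by writing $\chi_{n,0}(\cos\alpha_1,\dots,\cos\alpha_n|\rho)$ as $2^{-n}\sum_{\mathbf{i}\in\{-1,1\}^n}\bigl(1-\rho\cos(\sum_s i_s\alpha_s)\bigr)/\bigl(1-2\rho\cos(\sum_s i_s\alpha_s)+\rho^2\bigr)$ via Proposition \ref{ilocz} and Corollary \ref{suma}. You merely make explicit some details the paper leaves implicit (the justification of the interchanges and the strict positivity of each summand), which is all to the good.
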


\begin{proof}
For the first assertion recall that based on Theorem \ref{main} we have
\begin{gather*}
\chi_{n,0}(\cos(\alpha_{1}),...,\cos(\alpha_{n})|\rho)\allowbreak
=\allowbreak\sum_{k\geq0}\rho^{k}\prod_{j=1}^{n}T_{k}(\cos(\alpha
_{j}))\allowbreak=\\
\allowbreak\frac{1}{2^{n}}\sum_{i_{1}\in\{-1,1\}}...\sum_{i_{n}\in
\{-1,1\}}\frac{(1-\rho\cos(\sum_{k=1}^{n}i_{k}\alpha_{k}))}{(1-2\rho\cos
(\sum_{k=1}^{n}i_{k}\alpha_{k})+\rho^{2})},
\end{gather*}
which is nonnegative for all $\alpha_{i}\in\mathbb{R}$, $i\allowbreak
=\allowbreak1,...,n$ and $\left\vert \rho\right\vert <1.$ \newline The
remaining part follows directly the definition (\ref{_ktnu}) of $\chi_{n,0}$
and the properties of polynomials $T_{i}$.
\end{proof}

Let us now finish the case $n\allowbreak=\allowbreak2.$ That is let us
calculate $\chi_{2,0}^{n,m}(x,y|\rho),$ $\chi_{1,1}^{n,m}(x,y|\rho)$. The case
of $\chi_{0,2}^{n,m}(x,y|\rho)$ has been solved in e.g. \cite{SzablAW} (Lemma
3, with $q\allowbreak=\allowbreak0).$

\begin{proposition}
\label{2wym}i)
\begin{align*}
\chi_{1,0}^{m,0}(x|\rho)  &  =\sum_{i=0}^{\infty}\rho^{i}T_{i+m}%
(x)\allowbreak=\allowbreak\frac{T_{m}(x)-\rho T_{m-1}(x)}{w_{1}(x|\rho)},\\
\chi_{0,1}^{0,m}(x|\rho)  &  =\sum_{i=0}^{\infty}\rho^{i}U_{i+m}%
(x)=\frac{U_{m}(x)-\rho U_{m-1}(x)}{w_{1}(x|\rho)},
\end{align*}

ii)%
\begin{gather*}
\chi_{2,0}^{n,m}(x,y|\rho)\allowbreak=\allowbreak\sum_{k\geq0}\rho^{k}%
T_{k+n}(x)T_{k+m}(y)\allowbreak=\allowbreak\\
(T_{n}(x)T_{m}(y)(w_{2}(x,y|\rho)-\rho^{4})\\
+\rho T_{n+1}(x)T_{m+1}(y)(1-2\rho^{2}+4\rho^{2}(x^{2}+y^{2})-4\rho xy)\\
+\rho^{2}T_{n+2}(x)T_{m+2}(y)(1-4\rho xy)+\rho^{3}T_{n+3}(x)T_{m+3}%
(y))/w_{2}(x,y|\rho),
\end{gather*}

iii)
\begin{gather*}
\chi_{0,2}^{n,m}(x,y|\rho)\allowbreak=\sum_{j\geq0}\rho^{j}U_{j+n}%
(x)U_{j+m}(y)\allowbreak=\\
(U_{n}(x)U_{m}(y)(w_{2}(x,y|\rho)-\rho^{4})\\
+\rho U_{n+1}(x)U_{m+1}(y)(1-2\rho^{2}+4\rho^{2}(x^{2}+y^{2})-4\rho xy)\\
+\rho^{2}U_{n+2}(x)U_{m+2}(y)(1-4\rho xy)+\rho^{3}U_{n+3}(x)U_{m+3}%
(y))/w_{2}(x,y|\rho)
\end{gather*}
$\allowbreak$\newline

iv)
\begin{gather*}
\chi_{1,1}^{n,m}(x,y|\rho)\allowbreak=\allowbreak\sum_{j\geq0}\rho^{j}%
U_{m+j}(x)T_{n+j}(y)\allowbreak=\\
(T_{n}(y)U_{m}(x)(w_{2}(x,y|\rho)-\rho^{4})\\
+\rho T_{n+1}(y)U_{m+1}(x)(1-2\rho^{2}+4\rho^{2}(x^{2}+y^{2})-4\rho xy)\\
+\rho^{2}T_{n+2}(y)U_{m+2}(y)(1-4\rho xy)+\rho^{3}T_{n+3}(y)U_{m+3}%
(y))/w_{2}(x,y|\rho).
\end{gather*}
$\allowbreak\allowbreak\allowbreak$
\end{proposition}

\begin{proof}
We apply a formula (\ref{diff}). For i) we take $n=1$ and notice that values
of derivatives of $w_{1}$ respect to $\rho$ at $\rho=0$ are $1,$ $-2x,$ $2.$

To get ii) we notice that subsequent derivatives of $w_{2}$ with respect to
$\rho$ at $\rho=0$ are $1,$ $-4xy,$ $8x^{2}+8y^{2}-4,$ $-24xy$. Having this
and applying directly (\ref{diff}) we get certain defined formula expanded in
powers of $\rho.$ Now it takes Mathematica to get this form.

iii) and iv) We argue similarly getting expansions in powers of $\rho.$ Then
using Mathematica we try to get more friendly form.
\end{proof}

As a corollary we get formulae presented in (\ref{_2}) and (\ref{_3}) when
setting $n\allowbreak=\allowbreak m\allowbreak=\allowbreak0$ and remembering
that $T_{-i}(x)\allowbreak=\allowbreak T_{i}(x),$ $U_{-i}(x)=-U_{i-2}(x),$ for
$i\allowbreak=\allowbreak0,1,2$.

\begin{corollary}
\label{3wym}$\forall x,y,z\in\lbrack-1,1],\left\vert \rho\right\vert <1:$

i)
\begin{gather*}
\chi_{3,0}(x,y,z|\rho)=\sum_{i\geq0}\rho^{i}T_{i}(x)T_{i}(y)T_{i}%
(z)\allowbreak=\allowbreak((1+\rho^{2})^{3}\allowbreak\allowbreak
+\allowbreak8\rho^{4}\left(  x^{4}+y^{4}+z^{4}\right)  \allowbreak
+\allowbreak\allowbreak32\rho^{4}x^{2}y^{2}z^{2}\allowbreak\\
-\allowbreak2\left(  \rho^{2}+1\right)  \left(  \rho^{2}+3\right)  \rho
^{2}\left(  x^{2}+y^{2}+z^{2}\right)  \allowbreak+\allowbreak4\left(  \rho
^{4}+3\right)  \rho^{2}\left(  x^{2}y^{2}+x^{2}z^{2}+y^{2}z^{2}\right)
\allowbreak\\
-\newline\allowbreak4\left(  3\rho^{2}+5\right)  \rho^{3}xyz\left(
x^{2}+y^{2}+z^{2}\right)  \allowbreak-\allowbreak\left(  \rho^{6}-15\rho
^{4}-25\rho^{2}+7\right)  \rho xyz)\allowbreak/w_{3}(x,y,z|\rho),
\end{gather*}

ii)
\begin{gather*}
\chi_{0,3}(x,y,z|\rho)=\sum_{i\geq0}\rho^{i}U_{i}(x)U_{i}(y)U_{i}(z)=\\
((1+\rho^{2})^{3}+16\rho^{3}xyz-4\rho^{2}(1+\rho^{2})(x^{2}+y^{2}%
+z^{2}))/w_{3}(x,y,z|\rho),
\end{gather*}

iii)
\begin{gather*}
\chi_{1,2}(x,y,z|\rho)=\sum_{i\geq0}\rho^{i}T_{i}(x)U_{i}(y)U_{i}(z)=\\
(\left(  \rho^{2}+1\right)  ^{3}\allowbreak+\allowbreak8\rho^{4}%
x^{4}\allowbreak-\allowbreak16\rho^{3}x^{3}yz\allowbreak-\allowbreak2\left(
\rho^{2}+1\right)  \left(  \rho^{2}+3\right)  \rho^{2}x^{2}\allowbreak\\
\allowbreak+\allowbreak8\rho^{2}x^{2}\left(  y^{2}+z^{2}\right)
-\allowbreak4\rho\left(  5-(\rho^{2}+2)^{2}\right)  xyz\allowbreak
-\allowbreak4\left(  \rho^{2}+1\right)  \rho^{2}(y^{2}+z^{2})\allowbreak
)/w_{3}(x,y,z|\rho),
\end{gather*}

iv)
\begin{gather*}
\chi_{2,1}(x,y,z|\rho)\sum_{i\geq0}\rho^{i}T_{i}(x)T_{i}(y)U_{i}(z)=\\
(\left(  \rho^{2}+1\right)  ^{3}\allowbreak+\allowbreak8\rho^{4}\left(
x^{4}+y^{4}\right)  \allowbreak-\allowbreak2\left(  \rho^{2}+1\right)  \left(
\rho^{2}+3\right)  \rho^{2}\left(  x^{2}+y^{2}\right)  \allowbreak\\
+\allowbreak4\left(  \rho^{4}+3\right)  \rho^{2}x^{2}y^{2}\allowbreak
+\allowbreak16\rho^{4}x^{2}y^{2}z^{2}\allowbreak+\allowbreak8\rho^{2}%
z^{2}\left(  x^{2}+y^{2}\right)  \allowbreak-\allowbreak8\left(  \rho
^{2}+2\right)  \rho^{3}xyz\left(  x^{2}+y^{2}\right)  \allowbreak\\
-\allowbreak8\rho^{3}xyz^{3}\allowbreak-\allowbreak2\left(  -5\rho^{4}%
-10\rho^{2}+3\right)  \rho xyz\allowbreak-\allowbreak4\left(  \rho
^{2}+1\right)  \rho^{2}z^{2})/w_{3}(x,y,z|\rho),
\end{gather*}

where $w_{3}(x,y,z|\rho)\allowbreak$ is given by (\ref{w3}).
\end{corollary}

\begin{proof}
Again we apply formula (\ref{diff}). Besides we take $n\allowbreak
=\allowbreak3,$ $k\allowbreak=\allowbreak0$ for i), $n\allowbreak
=\allowbreak0,$ $k\allowbreak=\allowbreak3$ for ii), $n\allowbreak
=\allowbreak1,$ $k\allowbreak=\allowbreak2$ for iii) and $n\allowbreak
=\allowbreak2,$ $k\allowbreak=\allowbreak1$ for iv). Now we have to remember
that successive derivatives of $w_{3}$ with respect to $\rho$ taken at
$\rho\allowbreak=\allowbreak0$ are respectively $1,$ $-8xyz,$ $8(1\allowbreak
-\allowbreak(x^{2}+y^{2}+z^{2})\allowbreak+\allowbreak4(x^{2}y^{2}%
\allowbreak+\allowbreak x^{2}z^{2}\allowbreak+\allowbreak y^{2}z^{2})),$
$48xyz(5\allowbreak-\allowbreak4(x^{2}+y^{2}+z^{2})),$ $48(3\allowbreak
-\allowbreak8(x^{2}+y^{2}+z^{2})\allowbreak+\allowbreak8(x^{4}+y^{4}%
+z^{4})\allowbreak+\allowbreak32x^{2}y^{2}z^{2}),$ $960xyz(5\allowbreak
-\allowbreak4(x^{2}+y^{2}+z^{2})),$ $2880(1\allowbreak-\allowbreak(x^{2}%
+y^{2}+z^{2})\allowbreak+\allowbreak4(x^{2}y^{2}\allowbreak+\allowbreak
x^{2}z^{2}\allowbreak+\allowbreak y^{2}z^{2})),$ $-40320xyz.$ Then we get
certain formulae by applying directly formula (\ref{diff}). The expression are
long and not very legible. We applied Mathematica to get forms presented in
i), ii) iii) and iv).
\end{proof}

\section{Kibble--Slepian formula and related sums for Chebyshev polynomials
\label{kibb}}

Let $f_{n}(x_{1},...,x_{n}|K_{n})$ denote the density of the normal
distribution with zero expectations and non-singular covariance matrix $K_{n}$
such that $\operatorname*{var}(X_{i})=\allowbreak1$ for $i\allowbreak
=\allowbreak1,...,n,$ i.e. having $1^{\prime}s$ on the diagonal. Let
$\rho_{ij}$ denote $ij-$th entry of matrix $K_{n}.$ Consequently, the
one-dimensional marginals $f_{1}$ are given by:%
\[
f_{1}(x)\allowbreak=\allowbreak\exp(-x^{2}/2)/\sqrt{2\pi}.
\]
Let us also denote by $S_{n}$ a symmetric $n\times n$ matrix with zeros on the
diagonal and nonnegative integers as off-diagonal entries. Let us denote the
$ij-$th entry of the matrix $S_{n}$ by $s_{ij}.$ Recall that Kibble in the 40s
and Slepian in the 70s presented the following formula:%
\begin{equation}
\frac{f_{n}(x_{1},...,x_{n}|K_{n})}{\prod_{m=1}^{n}f_{1}(x_{m})}=\sum
_{S}(\prod_{1\leq i<j\leq n}\frac{\left(  \rho_{ij}\right)  ^{s_{ij}}}%
{s_{ij}!}\prod_{m=1}^{n}H_{\sigma_{m}}(x_{m})), \label{K-S}%
\end{equation}
where $H_{i}(x)$ denotes $i-th$ (so called probabilistic) Hermite polynomial
i.e. forming the orthonormal base of the space of functions square integrable
with respect to the weight $f_{1}(x)$, $\sigma_{m}\allowbreak=\allowbreak
\sum_{j=1}^{m-1}s_{jm}\allowbreak+\allowbreak\sum_{j=1+m}^{n}s_{mj},$
$\sum_{S}$ denotes, as before, summation over all $n(n-1)/2$ non-diagonal
entries of the matrix $S_{n}.$ To see more details on Kibble--Slepian formula
see e.g. recent paper by Ismail \cite{Ismal2016}. A partially successful
attempt was made by Szab\l owski in \cite{SzablKib} where for $n\allowbreak
=\allowbreak3$ the author replaced polynomials $H_{n}$ by the so called
$q-$Hermite polynomials $H_{n}(x|q)$ and $s_{ij}!$ substituted by
$[s_{ji}]_{q}!$ where $[n]_{q}\allowbreak=\allowbreak(1-q^{n})/(1-q)$ for
$\left\vert q\right\vert <1,$ $[n]_{1}\allowbreak=\allowbreak n$ and
$[n]_{q}!\allowbreak=\allowbreak\prod_{i=1}^{n}[i]_{q}$ with $[0]_{q}%
!\allowbreak=\allowbreak1.$ Taking into account that $H_{n}(x|0)\allowbreak
=\allowbreak U_{n}(x/2)$ and $[n]_{0}!\allowbreak=\allowbreak1$ we see that
(\ref{K-S}) has been generalized and summed already for other polynomials. The
intension of summing in \cite{SzablKib} was to find a generalization of the
normal distribution that has compact support. The attempt was partially
successful since also one has obtained a relatively closed form for the sum,
however the obtained sum was not positive for the suitable values of
parameters $\rho_{ij}$ and all values of parameters $\left\vert q\right\vert
<1.$

In the present paper, we are going to present closed form of the sum
(\ref{K-S}) where polynomials $H_{n}$ are replaced by Chebyshev polynomials of
both the first and second kind and $s_{ij}!$ are replaced by $1.$ This last
replacement is justified by the fact that $\left[  s_{ji}\right]
_{q}!\allowbreak=\allowbreak1$ if $q\allowbreak=\allowbreak0.$ For more
details, see publications on the so-called $q-$series and also brief
introduction at the beginning of the Section \ref{gen}, below.

In other words, we are going to find closed forms for the sums (\ref{_t}) and
(\ref{_u}), where $\mathbf{x\allowbreak}$ and $K_{n},$ used below, mean, as
before, $\mathbf{x=\allowbreak(}x_{1},...,x_{n})$ while $K_{n}$ denotes
symmetric $n\times n$ matrix with ones on its diagonal and $\rho_{ij}$ as its
$ij-th$ entry. We will assume that all $\rho^{\prime}s$ are from the segment
$(-1,1)$ and additionally that matrix $K_{n}$ is positive definite.

We have the following result:

\begin{theorem}
\label{kibble}Let us denote $\mathcal{K}_{n}\allowbreak=\allowbreak\left\{
(i,j):1\leq i<j\leq n\right\}  $, $\beta_{n,m}\allowbreak=\beta_{n,m}%
(i_{n},i_{m})\allowbreak=\allowbreak i_{n}\alpha_{n}+i_{m}\alpha_{m} $. For
$S\subseteq\mathcal{K}_{n}$ let $\rho_{S}\allowbreak=\allowbreak
\prod_{(n,m)\in S}\rho_{nm}$, $b_{S}\allowbreak=\allowbreak\sum_{(n,m)\in
S}\beta_{n,m}$, $B_{1,\ldots,n}\allowbreak=\allowbreak B(i_{1},\ldots
,i_{n})\allowbreak=\allowbreak\sum_{j=1}^{n}i_{j}\alpha_{j} $.

We have i)
\begin{gather*}
f_{T}(\cos(\alpha_{1}),...,\cos(\alpha_{n})|K_{n})\allowbreak=\\
\allowbreak\frac{1}{2^{n}}\sum_{i_{1}\in\{-1,1\}}...\sum_{i_{n}\in
\{-1,1\}}\frac{\sum_{k=0}^{n}(-1)^{k}\sum_{S_{k}\subseteq\mathcal{K}_{n}%
}^{\prime}\rho_{S_{k}}\cos(b_{S_{k}})}{\prod_{j=1}^{n}\prod_{m=j+1}%
^{n}(1-2\rho_{jm}\cos(\beta_{j,m}(i_{j},i_{m}))+\rho_{jm}^{2})},
\end{gather*}
ii) If $n$ is even then%
\begin{gather*}
f_{U}(\cos(\alpha_{1}),...,\cos(\alpha_{n})|K_{n})=\\
(-1)^{n/2}\frac{1}{2^{n}\prod_{j=1}^{n}\sin(\alpha_{j})}\sum_{i_{1}%
\in\{-1,1\}}...\sum_{i_{n+k}\in\{-1,1\}}(-1)^{\sum_{l=1}^{n}(i_{l}+1)/2}\\
\frac{\sum_{k=0}^{n}(-1)^{k}\sum_{S_{k}\subseteq\mathcal{K}_{n}}^{\prime}%
\rho_{S_{k}}\cos(B_{1,\ldots,n}-b_{S_{k}})}{\prod_{j=1}^{n}\prod_{m=j+1}%
^{n}(1-2\rho_{jm}\cos(\beta_{j,m}(i_{j},i_{m}))+\rho_{jm}^{2})},
\end{gather*}
while if $n$ is odd then
\begin{gather*}
f_{U}(\cos(\alpha_{1}),...,\cos(\alpha_{n})|K_{n})=\\
(-1)^{n/2}\frac{1}{2^{n}\prod_{j=1}^{n}\sin(\alpha_{j})}\sum_{i_{1}%
\in\{-1,1\}}...\sum_{i_{n+k}\in\{-1,1\}}(-1)^{\sum_{l=1}^{n}(i_{l}+1)/2}\\
\frac{\sum_{k=0}^{n-1}(-1)^{k}\sum_{S_{k}\subseteq\mathcal{K}_{n}}^{\prime
}\rho_{S_{k}}\sin(B_{1,\ldots,n}-b_{S_{k}})}{\prod_{j=1}^{n}\prod_{m=j+1}%
^{n}(1-2\rho_{jm}\cos(\beta_{j,m}(i_{j},i_{m}))+\rho_{jm}^{2})}%
\end{gather*}
where $S_{k}$ denotes any subset of $\mathcal{K}_{n}$ that contains $k$
elements and $\sum_{S_{k}\in K_{n}}^{\prime}$ means summation over all $S_{k}$.
\end{theorem}

\begin{proof}
Let us consider (\ref{_t}) first. Keeping in mind assertions of Proposition
\ref{ilocz} we see that $f_{T}(\cos(\alpha_{1}),...,\cos(\alpha_{n})|K_{n})$
is the sum of $2^{n}$ summands depending on different arrangement of values of
variables $i_{k}\in\{-1,1\},$ $k\allowbreak=\allowbreak1,...,n.$ Each summand
is equal to cosine taken at $\sum_{j=1}^{n}i_{j}s_{j}\alpha_{j}.$ Recalling
the definition of numbers $s_{j}$ we see that in such sum $s_{mj},$ $1\leq
m<j\leq n$ appears twice, once as $s_{mj}\alpha_{m}i_{m}$ and secondly as
$s_{mj}\alpha_{j}i_{j}.$ Or in other words, we have $\sum_{j=1}^{n}i_{j}%
s_{j}\alpha_{j}\allowbreak=\allowbreak\sum_{m=1}^{n-1}\sum_{j=m+1}^{n}%
s_{mj}(\alpha_{m}i_{m}+\alpha_{j}i_{j}).$ Having this in mind, we can now
apply summation formula (\ref{ksc}) with $\beta\allowbreak=\allowbreak0$ and
have summed each cosine with a particular system of values of the set
$\{i_{j}:j\allowbreak=\allowbreak1,...,n\}.$ Now it remains to sum over, all
such systems of values.

As far as other assertions are concerned, we use the definition of Chebyshev
polynomials of the second kind, formulae presented in Proposition \ref{ilocz}.
We have in this case $\sum_{j=1}^{n}i_{j}(s_{j}+1)\alpha_{j}\allowbreak
=\allowbreak\sum_{j=1}^{n}i_{j}\alpha_{j}\allowbreak+\allowbreak\sum
_{m=1}^{n-1}\sum_{j=m+1}^{n}s_{mj}(\alpha_{m}i_{m}+\alpha_{j}i_{j}).$ As the
result we deal with signed sum of either sines or cosines depending on the
fact if $n(n-1)/2$ (the number of different $s_{mj},$ $1\leq m<j\leq n$ ) is
odd or even. Now again we refer to either (\ref{kss}) or (\ref{ksc}) depending
on the parity of $n(n-1)/2$ this time with $\beta\allowbreak=\allowbreak
\sum_{j=1}^{n}i_{j}\alpha_{j}.$
\end{proof}

\begin{corollary}
Both functions $f_{T}(\mathbf{x}|K_{n})$ and $f_{U}(\mathbf{x}|K_{n})$ are
rational functions of all its arguments. Moreover, they have the same
denominators given by the following formula:%
\[
V_{n}(\mathbf{x|}K_{n})=\prod_{j=1}^{n-1}\prod_{k=j+1}^{n}w_{2}(x_{j}%
,x_{k}|\rho_{ij}),
\]
where $w_{2}$ is given by the formula (\ref{w2}).
\end{corollary}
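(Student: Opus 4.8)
The plan is to start from the explicit closed forms established in Theorem \ref{kibble} and to show that, after placing its $2^n$ summands over a common denominator, that denominator is exactly $V_n(\mathbf{x}|K_n)$ and the resulting numerator is a genuine polynomial in $x_1,\dots,x_n$. Throughout I substitute $x_j=\cos\alpha_j$, so that the formulae of Theorem \ref{kibble} are available.

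First I would analyse the denominators. In every summand of Theorem \ref{kibble} the denominator is $\prod_{1\le j<m\le n}(1-2\rho_{jm}\cos(\beta_{j,m}(i_j,i_m))+\rho_{jm}^{2})=\prod_{1\le j<m\le n}w_1(\cos(i_j\alpha_j+i_m\alpha_m)|\rho_{jm})$. Since cosine is even, $\cos(i_j\alpha_j+i_m\alpha_m)$ equals $\cos(\alpha_j+\alpha_m)$ when $i_j=i_m$ and $\cos(\alpha_j-\alpha_m)$ when $i_j\neq i_m$, so each factor is either $w_1(\cos(\alpha_j+\alpha_m)|\rho_{jm})$ or $w_1(\cos(\alpha_j-\alpha_m)|\rho_{jm})$. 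Taking as common denominator the product, over all pairs, of \emph{both} possible factors and invoking (\ref{pro2}) in the form $w_1(\cos(\alpha_j+\alpha_m)|\rho_{jm})\,w_1(\cos(\alpha_j-\alpha_m)|\rho_{jm})=w_2(\cos\alpha_j,\cos\alpha_m|\rho_{jm})$, this common denominator becomes precisely $\prod_{1\le j<m\le n}w_2(x_j,x_m|\rho_{jm})=V_n(\mathbf{x}|K_n)$, which depends only on the $x_j=\cos\alpha_j$. The same denominator serves both $f_T$ and $f_U$, establishing the ``same denominators'' claim at once.

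Next I would prove that the numerator is polynomial in the $x_j$. After clearing $V_n$, the combined numerator is a finite sum of products of the trigonometric factors of Theorem \ref{kibble} with the missing $w_1$-polynomials evaluated at $\cos(\alpha_j\pm\alpha_m)$; hence it is a trigonometric polynomial in $\alpha_1,\dots,\alpha_n$. For $f_T$, observe that $f_T(\mathbf{x}|K_n)$ is by its definition \eqref{s_T} a function of the variables $x_j$ alone, hence even under each reflection $\alpha_k\mapsto-\alpha_k$; since $V_n$ is likewise even in each $\alpha_k$, the numerator $N_T:=f_T\cdot V_n$ is an even trigonometric polynomial in every $\alpha_k$ separately. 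An even trigonometric polynomial in $\alpha_k$ is a combination of $\cos(m\alpha_k)=T_m(\cos\alpha_k)$, hence a polynomial in $\cos\alpha_k$; applying this variable by variable shows $N_T$ is a polynomial in $x_1,\dots,x_n$, so $f_T=N_T/V_n$ is rational with denominator $V_n$.

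For $f_U$ the same scheme applies once the spurious factor $\prod_{j=1}^{n}\sin\alpha_j$ appearing in Theorem \ref{kibble} is disposed of, and this is where the main obstacle lies. Writing the cleared numerator as $\tilde N_U/\prod_{j}\sin\alpha_j$ with $\tilde N_U$ a trigonometric polynomial, I note that each $U_m(\cos\alpha)$ is a polynomial in $\cos\alpha$, so $f_U$ is again even in every $\alpha_k$, whereas $\prod_j\sin\alpha_j$ is odd in every $\alpha_k$ and $V_n$ is even. Consequently $\tilde N_U=f_U\cdot V_n\cdot\prod_j\sin\alpha_j$ is odd in each $\alpha_k$. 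An odd trigonometric polynomial in $\alpha_k$ is a combination of $\sin(m\alpha_k)=\sin(\alpha_k)\,U_{m-1}(\cos\alpha_k)$, hence divisible by $\sin\alpha_k$; doing this for each $k$ shows $\prod_j\sin\alpha_j$ divides $\tilde N_U$, and the quotient $N_U$ is then an even trigonometric polynomial in each variable, i.e.\ a polynomial in $x_1,\dots,x_n$. Thus $f_U=N_U/V_n$ as well. The parity argument is what settles the delicate cancellation of $\prod_j\sin\alpha_j$ without any explicit computation, and it is the only genuinely nontrivial point of the proof.
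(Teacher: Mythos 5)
Your proposal is correct, and for the heart of the corollary --- identifying the common denominator --- it follows exactly the paper's route: in each of the $2^{n}$ summands of Theorem \ref{kibble} the denominator factors are $1-2\rho _{jm}\cos (i_{j}\alpha _{j}+i_{m}\alpha _{m})+\rho _{jm}^{2}$, evenness of cosine reduces each to one of the two possibilities $\cos (\alpha _{j}\pm \alpha _{m})$, and pairing the two and applying (\ref{pro2}) yields $w_{2}(x_{j},x_{m}|\rho _{jm})$, hence $V_{n}$. Where you go beyond the paper is in the second half: the paper dismisses the claim that the resulting numerators are polynomials in $x_{1},\dots ,x_{n}$ (equivalently, that $f_{T}$ and $f_{U}$ are rational in the $x$'s and not merely in the $\alpha $'s) as something ``easy to notice,'' and in particular never addresses the cancellation of the factor $\prod_{j}\sin \alpha _{j}$ appearing in the $f_{U}$ formulae. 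Your parity argument --- $N_{T}=f_{T}V_{n}$ is a trigonometric polynomial even in each $\alpha _{k}$ separately, hence a polynomial in the $\cos \alpha _{k}$, while $\tilde{N}_{U}=f_{U}V_{n}\prod_{j}\sin \alpha _{j}$ is odd in each $\alpha _{k}$, hence divisible by $\prod_{j}\sin \alpha _{j}$ with an even quotient --- supplies exactly this missing detail, and does so without any explicit computation; you are right that it is the only genuinely nontrivial point. The argument is sound as written, since each summand's denominator contains, for every pair $(j,m)$, exactly one of the two $w_{1}$-factors whose product is $w_{2}(x_{j},x_{m}|\rho _{jm})$, so every summand's denominator divides $V_{n}$ and the clearing step is legitimate.
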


\begin{proof}
First of all, notice that following formulae given in Theorem \ref{kibble} the
functions $f_{T}(\mathbf{x}|K_{n})$ and $f_{U}(\mathbf{x}|K_{n})$ are rational
functions of $x_{1}\allowbreak=\allowbreak\cos(\alpha_{1}),...,x_{n}%
\allowbreak=\allowbreak\cos(\alpha_{n}).$ Moreover, it is easy to notice that
all formulae have the same denominators. To find these denominators notice
that the factors in each denominator referring to $(i_{j},i_{m})$ and
$(-i_{j},i_{m})$ are the same since cosine is an even function and that
cosines appear solely in denominators. Further, we can group factors
$(1-2\rho_{jm}\cos(\beta_{j,m}(i_{j},i_{m}))+\rho_{jm}^{2})$ and
$(1-2\rho_{jm}\cos(\beta_{j,m}(i_{j},-i_{m}))+\rho_{jm}^{2})$ and apply
(\ref{pro2})
\begin{gather*}
(1-2\rho_{jm}\cos(\beta_{j,m}(i_{j},i_{m}))+\rho_{jm}^{2})(1-2\rho_{jm}%
\cos(\beta_{j,m}(i_{j},-i_{m}))+\rho_{jm}^{2})\\
=w_{2}(\cos(\alpha_{j}),\cos(\alpha_{m})|\rho_{jm}).
\end{gather*}
since $\beta_{n,m}(i_{n},i_{m})\allowbreak=\allowbreak i_{n}\alpha_{n}%
+i_{m}\alpha_{m}.$
\end{proof}

\begin{corollary}
Let us denote $\beta_{kj}=i_{k}\alpha_{k}+i_{j}\alpha_{j},$ $k\allowbreak
=\allowbreak1,2,$ $j=2,3,$ $k<j,$ $p\allowbreak=\allowbreak\rho_{12}\rho
_{13}\rho_{23},$ $B_{1,2,3}\allowbreak=\sum_{j=1}^{3}i_{j}a_{j},$
\newline$\allowbreak$%
\begin{gather*}
c(i_{1},i_{2},i_{3},\alpha_{1},\alpha_{2},\alpha_{3},\rho_{12},\rho_{13}%
,\rho_{23})\allowbreak=\allowbreak(1-\sum_{1\leq k<j\leq3}\rho_{k,j}\cos
(\beta_{k,j})\allowbreak+\allowbreak\\
p\sum_{1\leq k<j\leq3}\rho_{k,j}^{-1}\cos(2B_{1,2,3}\allowbreak-\allowbreak
\beta_{kj})\allowbreak-\allowbreak p\cos(2B_{1,2,3}))/\prod_{1\leq k<j\leq
3}(1-\rho_{kj}\cos(\beta_{kj})+\rho_{kj}^{2}),
\end{gather*}

\begin{gather*}
s(i_{1},i_{2},i_{3},\alpha_{1},\alpha_{2},\alpha_{3},\rho_{12},\rho_{13}%
,\rho_{23})\allowbreak=(\sin(B_{1,2,3})(1+p)\\
-(\rho_{12}\sin(i_{3}\alpha_{3})+\rho_{13}\sin(i_{2}\alpha_{2})+\rho_{23}%
\sin\left(  i_{1}\alpha_{2}\right)  )\allowbreak\\
-\allowbreak(\rho_{12}\rho_{13}\sin(i_{1}\alpha_{1})+\rho_{12}\rho_{23}%
\sin(i_{2}\alpha_{2})+\rho_{13}\rho_{23}\sin(i_{3}\alpha_{3}))\\
/\allowbreak\prod_{1\leq k<j\leq3}(1-\rho_{kj}\cos(\beta_{kj})+\rho_{kj}^{2}).
\end{gather*}
Then:

i) $f_{T}(\cos(\alpha_{1}),\cos(\alpha_{2}),\cos(\alpha_{3}),\rho_{12}%
,\rho_{13},\rho_{23})=$\newline$\frac{1}{4}\sum_{i_{2}\in\{-1,1\}}\sum
_{i_{3}\in\{-1,1\}}c(1,i_{2},i_{3},\alpha_{1},\alpha_{2},\alpha_{3},\rho
_{12},\rho_{13},\rho_{23}),$

ii) $f_{U}(\cos(\alpha_{1}),\cos(\alpha_{2}),\cos(\alpha_{3}),\rho_{12}%
,\rho_{13},\rho_{23})\allowbreak=\allowbreak$\newline$\frac{1}{8}\sum
_{i_{1}\in\{-1,1\}}\sum_{i_{2}\in\{-1,1\}}\sum_{i_{3}\in\{-1,1\}}%
(-1)^{\sum_{k=1}^{3}(i_{k}+1)/2}s(i_{1},i_{2},i_{3},\alpha_{1},\alpha
_{2},\alpha_{3},\rho_{12},\rho_{13},\rho_{23})$

$p/\rho_{kj}$ in case of $\rho_{kj}\allowbreak=\allowbreak0$ is understood as
the limit when $\rho_{kj}\allowbreak\rightarrow\allowbreak0.$

iii) $f_{U}(x,y,z,\rho_{12},\rho_{13},\rho_{23})\allowbreak=\allowbreak
(4\rho_{12}\rho_{13}(\rho_{23}-\rho_{12}\rho_{13})(1-\rho_{23}^{2}%
)x^{2}\allowbreak+\allowbreak4\rho_{12}\rho_{23}(\rho_{13}-\rho_{12}\rho
_{23})(1-\rho_{13}^{2})y^{2}\allowbreak+\allowbreak4\rho_{13}\rho_{23}%
(\rho_{12}-\rho_{13}\rho_{23})(1-\rho_{12}^{2})z^{2}\allowbreak-\allowbreak
4(\rho_{13}-\rho_{12}\rho_{23})(\rho_{23}-\rho_{12}\rho_{13})(1+\rho_{12}%
\rho_{13}\rho_{23})xy\allowbreak-\allowbreak4(\rho_{12}-\rho_{13}\rho
_{23})(\rho_{23}-\rho_{12}\rho_{13})(1+\rho_{12}\rho_{13}\rho_{23}%
)xz\allowbreak-\allowbreak4(\rho_{13}-\rho_{12}\rho_{23})(\rho_{12}-\rho
_{23}\rho_{13})(1+\rho_{12}\rho_{13}\rho_{23})yz\allowbreak+\allowbreak
(1-\rho_{12}^{2})(1-\rho_{13}^{2})(1-\rho_{23}^{2})(1-\rho_{12}\rho_{13}%
\rho_{23}))$\newline$/(w_{2}(x,y|\rho_{12})w_{2}(x,z|\rho_{13})w_{2}%
(y,z|\rho_{23}))$
\end{corollary}

\begin{proof}
First of all, notice that $\sum_{k=1}^{2}\sum_{j=k+1}^{3}\beta_{kj}%
\allowbreak=\allowbreak2B_{1,2,3}$ hence in particular $B_{1,2,3}%
\allowbreak-\allowbreak\sum_{k=1}^{2}\sum_{j=k+1}^{3}\beta_{kj}\allowbreak
=\allowbreak-B_{1,2,3}.$ Then the formula i) is clear based on (\ref{ksc})
with $\beta\allowbreak=\allowbreak B_{1,2,3}$. To get ii) notice that
$B_{1,2,3}\allowbreak-\allowbreak\beta_{12}\allowbreak=\allowbreak i_{3}%
\alpha_{3}$ and $B_{1,2,3}\allowbreak-\allowbreak\beta_{12}\allowbreak
-\allowbreak\beta_{13}\allowbreak=\allowbreak-i_{1}\alpha_{1},$ similarly for
the other pairs $(1,3)$ and $(2,3)$. Recall also that $B_{1,2,3}%
\allowbreak-\allowbreak\sum_{k=1}^{2}\sum_{j=k+1}^{3}\beta_{kj}\allowbreak
=\allowbreak-B_{1,2,3}.$ Now based on (\ref{kss}) ii) is also clear.

iii) was obtained with the help of Mathematica.
\end{proof}

\begin{remark}
With the help of Mathematica one can show, for example, that the numerator of
$f_{T}(x,y,z|K_{3})$ is a polynomial of degree $6$ and it consists of $265$
monomials. Numerical simulation suggest that it is a nonnegative on
$(-1,1)^{3}.$ Unfortunately $f_{U}(x,y,z|K_{3})$ is not nonnegative there
since we have for example \newline$f_{U}(-.9,-.95,.94,|\left[
\begin{array}
[c]{ccc}%
0 & .6 & .8\\
.6 & 0 & .9\\
.8 & .9 & 0
\end{array}
\right]  )\allowbreak=\allowbreak-0.0912121.$ Besides notice that it happens
in the case when matrix $\left[
\begin{array}
[c]{ccc}%
1, & .6 & .8\\
.6 & 1 & .9\\
.8 & .9 & 1
\end{array}
\right]  $ is positive definite. This observation is in accordance with the
general negative result presented in \cite{SzablKib} Theorem 1. Recall that
\cite{SzablKib} concerns something like generalization of $f_{U}$ to all
parameters $q\in(-1,1)$ taking into account that $q$-Hermite polynomials
$H_{n}(x|q)$ can be identified for $q\allowbreak=\allowbreak0$ with
polynomials $U_{n}(x/2).$ The example presented in \cite{SzablKib} concerns
the case (adopted to $q\allowbreak=\allowbreak0)$ when say $\rho
_{12}\allowbreak=\allowbreak0.$ Hence we see that there are many sets of $6$
tuples $x,y,z,\rho_{12},\rho_{13},\rho_{23}$ leading to negative values of
$f_{U}.$
\end{remark}

\section{Remarks on generalization\label{gen}}

In this section, we are going firstly to present $q-$generalization of the
Chebyshev of the first kind and secondly present some remarks and observations
that might help to obtain formulae similar to the ones presented in Theorem
\ref{main} with Chebyshev polynomials replaced by the so-called $q-$Hermite
$\left\{  h_{n}\right\}  $ and related polynomials. $q$ is here a certain real
(in general) number such that $\left\vert q\right\vert <1.$ Since in the
previous chapters we considered, so to say, the case $q=0$ we will assume in
this chapter that $q\neq0.$

To proceed further we need to recall certain notions used in $q-$series
theory: $\left[  0\right]  _{q}\allowbreak=\allowbreak0;$ $\left[  n\right]
_{q}\allowbreak=\allowbreak1+q+\ldots+q^{n-1},$ $\left[  n\right]
_{q}!\allowbreak=\allowbreak\prod_{j=1}^{n}\left[  j\right]  _{q},$ with
$\left[  0\right]  _{q}!\allowbreak=1,$%
\[%
%TCIMACRO{\QATOPD{[}{]}{n}{k}}%
%BeginExpansion
\genfrac{[}{]}{0pt}{}{n}{k}%
%EndExpansion
_{q}\allowbreak=\allowbreak\left\{
\begin{array}
[c]{ccc}%
\frac{\left[  n\right]  _{q}!}{\left[  n-k\right]  _{q}!\left[  k\right]
_{q}!} & , & 0\leq k\leq n\\
0 & , & otherwise
\end{array}
\right.  .
\]
$\binom{n}{k}$ will denote ordinary, well known binomial coefficient. \newline
It is useful to use the so-called $q-$Pochhammer symbol for $n\geq1:$%
\[
\left(  a|q\right)  _{n}=\prod_{j=0}^{n-1}\left(  1-aq^{j}\right)  ,~~\left(
a_{1},a_{2},\ldots,a_{k}|q\right)  _{n}\allowbreak=\allowbreak\prod_{j=1}%
^{k}\left(  a_{j}|q\right)  _{n}.
\]
with $\left(  a|q\right)  _{0}\allowbreak=\allowbreak1$. Note that $n$ can be
equal to $\infty,$ then the $q-$Pochhammer symbol is well defined provided
$\left\vert q\right\vert <1.$

Often $\left(  a|q\right)  _{n},$ as well as, $\left(  a_{1},a_{2}%
,\ldots,a_{k}|q\right)  _{n}$ will be abbreviated to $\left(  a\right)  _{n}$
and \newline$\left(  a_{1},a_{2},\ldots,a_{k}\right)  _{n},$ if it will not
cause misunderstanding.

It is easy to notice that $\left(  q\right)  _{n}=\left(  1-q\right)
^{n}\left[  n\right]  _{q}!$ and that%
\[%
%TCIMACRO{\QATOPD{[}{]}{n}{k}}%
%BeginExpansion
\genfrac{[}{]}{0pt}{}{n}{k}%
%EndExpansion
_{q}\allowbreak=\allowbreak\allowbreak\left\{
\begin{array}
[c]{ccc}%
\frac{\left(  q\right)  _{n}}{\left(  q\right)  _{n-k}\left(  q\right)  _{k}}
& , & n\geq k\geq0\\
0 & , & otherwise
\end{array}
\right.  .
\]
\newline The above mentioned formula is just an example where direct setting
$q\allowbreak=\allowbreak1$ is senseless however, the passage to the limit
$q\longrightarrow1^{-}$ makes sense.

Notice that in particular $\left[  n\right]  _{1}\allowbreak=\allowbreak
n,\left[  n\right]  _{1}!\allowbreak=\allowbreak n!,$ $%
%TCIMACRO{\QATOPD{[}{]}{n}{k}}%
%BeginExpansion
\genfrac{[}{]}{0pt}{}{n}{k}%
%EndExpansion
_{1}\allowbreak=\allowbreak\binom{n}{k},$ $(a)_{1}\allowbreak=\allowbreak1-a,$
$\left(  a;1\right)  _{n}\allowbreak=\allowbreak\left(  1-a\right)  ^{n}$ and
$\left[  n\right]  _{0}\allowbreak=\allowbreak\left\{
\begin{array}
[c]{ccc}%
1 & if & n\geq1\\
0 & if & n=0
\end{array}
\right.  ,$ $\left[  n\right]  _{0}!\allowbreak=\allowbreak1,$ $%
%TCIMACRO{\QATOPD{[}{]}{n}{k}}%
%BeginExpansion
\genfrac{[}{]}{0pt}{}{n}{k}%
%EndExpansion
_{0}\allowbreak=\allowbreak1,$ $\left(  a;0\right)  _{n}\allowbreak
=\allowbreak\left\{
\begin{array}
[c]{ccc}%
1 & if & n=0\\
1-a & if & n\geq1
\end{array}
\right.  .$

$i$ will denote, as before, the imaginary unit, unless otherwise clearly
stated. In the sequel we will need also the so-called $q-$Hermite polynomials.
There exists a very large literature on the properties as well as applications
of these polynomials. Let us recall only that the three-term recurrence
satisfied by these polynomials is the following
\[
h_{n+1}(x|q)=2xh_{n}(x|q)-(1-q^{n})h_{n-1}(x|q),
\]
with $h_{-1}(x|q)\allowbreak=\allowbreak0,$ $h_{0}(x|q)\allowbreak
=\allowbreak1.$ It is well known that the density, which makes these
polynomials orthogonal is the following
\[
f_{h}\left(  x|q\right)  =\frac{2\left(  q\right)  _{\infty}\sqrt{1-x^{2}}%
}{\pi}\prod_{k=1}^{\infty}l\left(  x|q^{k}\right)  ,
\]
where $l\left(  x|a\right)  =(1+a)^{2}-4x^{2}a.$ Moreover, generating
functions of these polynomials, are equal to:%
\begin{equation}
\sum_{j=0}^{\infty}\frac{t^{j}}{\left(  q\right)  _{j}}h_{j}\left(
x|q\right)  =\frac{1}{\prod_{k=0}^{\infty}v\left(  x|tq^{k}\right)  },
\label{ch1}%
\end{equation}
where $v\left(  x|a\right)  =1-2ax+a^{2}.$

\begin{remark}
For the sake of completeness of the paper, let us recall that $h_{n}%
(x|0)\allowbreak=\allowbreak U_{n}(x),$ for $n\geq-1.$
\end{remark}

\subsection{Conjectures, remarks and interesting identities\label{Ident}}

Theorem \ref{main} suggests the new method of summing characteristic
functions. One can formulate it in the following way.

\emph{Suppose, that we can guess, that the form of certain multivariate
characteristic function, say for example }%
\begin{equation}
\chi_{n}^{(l_{1},\ldots l_{n})}(x_{1},\ldots,x_{n}|\rho,q)=\sum_{j\geq0}%
\frac{\rho^{j}}{(q)_{j}}\prod_{k=1}^{n}h_{j+l_{k}}(x_{k}|q), \label{gen_ch}%
\end{equation}
\emph{where} \emph{numbers }$l_{1},\ldots,l_{n}$ \emph{are integer and
}$\left\vert \rho\right\vert ,\left\vert q\right\vert <1$, \emph{is of the
form of the ratio of two functions. Moreover, suppose that we can guess the
form of the denominator }$W_{n}(x_{1},\ldots,x_{n}|\rho,q)$ \emph{of this
ratio. Then the numerator can be obtained by the formula similar to
(\ref{diff}) i.e. by: }%
\[
\sum_{j=0}^{\infty}\rho^{j}\sum_{k=0}^{j}\frac{1}{k!}\left.  \frac{d^{k}%
}{d\rho^{k}}W_{n}(x_{1},...,x_{n+m}|\rho,q)\right\vert _{\rho=0}\frac
{1}{(q)_{j-k}}\prod_{s=1}^{n}h_{j-k+l_{s}}(x_{s}|q).
\]

\begin{remark}
There are classes of characteristic functions that have common denominators
like for example bivariate ones described in \cite{Szab5}, Proposition 7 (iv)
or, more generally, bivariate functions of the form similar to (\ref{gen_ch})
that were considered by Carlitz in \cite{Carlitz72}. The point is that all
these functions are at most bivariate. There are no results concerning more
variables. Thus we have the following conjecture.
\end{remark}

\begin{conjecture}
Functions $\chi_{n}^{(l_{1},\ldots l_{n})}(x_{1},\ldots,x_{n}|\rho,q)$ for all
$n,m,l_{1},\ldots,l_{n}$ are the ratios of some functions with the common
denominators of the form%
\[
W_{n}(x_{1},\ldots,x_{n}|\rho,q)=\prod_{i=0}^{\infty}w_{n}(x_{1}%
,...,x_{n}|\rho q^{i}),
\]
where functions $w_{n}(x_{1},\ldots,x_{n}|\rho)$ are given by the iterative
relationship (\ref{rek}).
\end{conjecture}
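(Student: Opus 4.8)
The plan is to prove the conjecture by establishing a $q$-difference functional equation in $\rho$ and iterating it, so that the infinite product emerges exactly as in the model case $n=1,m=0$. \emph{Base cases.} For $F(\rho)=\sum_{j\ge0}\rho^j h_j(x|q)/(q)_j$ one checks directly, using the recurrence $h_j(x|q)=2xh_{j-1}(x|q)-(1-q^{j-1})h_{j-2}(x|q)$ together with $(q)_j=(1-q^j)(q)_{j-1}$, that the coefficient of $\rho^j$ in $w_1(x|\rho)F(\rho)$ equals $q^jh_j(x|q)/(q)_j$; hence $w_1(x|\rho)F(\rho)=F(q\rho)$. Iterating and using $w_1(x|0)=1$ gives $F(\rho)=1/\prod_{i\ge0}w_1(x|\rho q^i)$, which is precisely (\ref{ch1}), since the factor $v$ there equals $w_1$ from (\ref{w1}). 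For two variables the classical $q$-Poisson--Mehler formula has denominator $(\rho e^{\pm i(\alpha+\beta)},\rho e^{\pm i(\alpha-\beta)};q)_\infty$; grouping its factors in conjugate pairs and applying Proposition \ref{pro2} identifies this denominator with $\prod_{i\ge0}w_2(\cos\alpha,\cos\beta|\rho q^i)$. Thus the conjecture holds for $n+m\le2$, and the generating functions involving the first-kind polynomials $t_n(x|q)=h_n(x|q)-\chi_{n-2}h_{n-2}(x|q)$ reduce to these by linearity.

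\emph{The general mechanism.} Writing $G(\rho)=\chi_{n,m}^{(t_1,\ldots,t_{n+m})}(x_1,\ldots,x_{n+m}|\rho,q)$ as in (\ref{gen_ch}), the aim is to show that multiplication by the polynomial $w_{n+m}(x_1,\ldots,x_{n+m}|\rho)$ collapses the series into a $q$-shifted series of the same family, $w_{n+m}(\ldots|\rho)G(\rho)=\widehat G(q\rho)$, where $\widehat G$ carries the same denominator structure. Since $W_{n,m}(\ldots|\rho,q)=w_{n+m}(\ldots|\rho)\,W_{n,m}(\ldots|q\rho,q)$, iterating this equation produces exactly the infinite product $\prod_{i\ge0}w_{n+m}(\ldots|\rho q^i)$ as the common denominator, the iteration converging because $|q|<1$ and $w_{n+m}(\ldots|0)=1$. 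The numerator is then read off from the $\rho=0$ Taylor data of $w_{n+m}$, by the formula displayed just before the statement (the $q$-analog of (\ref{diff})). The integer shift parameters $t_1,\ldots,t_{n+m}$ are carried along exactly as in the proof of Theorem \ref{main}, and at $q=0$ the product collapses to the single factor $w_{n+m}(\ldots|\rho)$, recovering Theorem \ref{main} and furnishing a consistency check.

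\emph{The main obstacle.} The crux is proving the collapse $w_{n+m}(\ldots|\rho)G(\rho)=\widehat G(q\rho)$ for $n+m\ge3$. In the case $q=0$ this is transparent: each $U_j$ and $T_j$ is a single trigonometric monomial, so Proposition \ref{ilocz} decomposes the product into single frequencies $\sum_\ell i_\ell\alpha_\ell$ whose $\rho$-series are summed geometrically, and $w_{n+m}$ is, by its very construction (\ref{rek}), the product of the corresponding $w_1$-multipliers. For $q\neq0$ this breaks down, because $h_j(\cos\theta|q)=\sum_r\binom{j}{r}_q\cos((j-2r)\theta)$ carries $q$-binomial weights depending on the summation index $j$, so the frequencies no longer separate and no closed product formula for $\prod_k h_{j}(x_k|q)$ is available. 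Establishing the functional equation therefore seems to require a genuine multilinear $q$-series identity (a Rogers-type linearization together with the $q$-binomial theorem) showing that the degree-$2^{n+m}$ polynomial $w_{n+m}$ is precisely the multiplier re-collapsing the $2^{n+m}$-fold convolution after the nested recurrences; a further wrinkle is that the coefficients $\chi_{n-2}$ defining the polynomials $t_n(x|q)$ must cooperate with this collapse. As an alternative route I would attempt an induction on the number of variables, integrating out one variable against the $q$-Hermite orthogonality weight in the spirit of Corollary \ref{gest} and using the recurrence (\ref{rek}) to pass from $w_{n+m-1}$ to $w_{n+m}$, thereby reducing the general case to the two-variable $q$-Poisson--Mehler building block.
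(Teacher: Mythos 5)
The statement you are proving is presented in the paper as a \emph{conjecture}: the author offers no proof, only the supporting evidence of the one- and two-dimensional computations in the section on generalizations. Your proposal does not close that gap, and you say so yourself. The verification of the base cases is sound: the computation showing that the coefficient of $\rho^{j}$ in $w_{1}(x|\rho)F(\rho)$ equals $q^{j}h_{j}(x|q)/(q)_{j}$, hence $w_{1}(x|\rho)F(\rho)=F(q\rho)$, is correct and recovers (\ref{ch1}); likewise the identification of the $q$-Poisson--Mehler denominator with $\prod_{i\geq0}w_{2}(\cos\alpha,\cos\beta|\rho q^{i})$ via (\ref{pro2}) is fine. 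But for $n+m\geq3$ the functional equation $w_{n+m}(\ldots|\rho)G(\rho)=\widehat G(q\rho)$ is stated as an ``aim,'' and the paragraph you label ``the main obstacle'' is an accurate diagnosis of why the $q=0$ mechanism (single-frequency decomposition via Proposition \ref{ilocz} followed by geometric summation) does not survive the passage to $q\neq0$: the $q$-binomial weights in $h_{j}(\cos\theta|q)=\sum_{r}\QATOPD[ ] {j}{r}_{q}\cos((j-2r)\theta)$ depend on $j$, so the frequencies do not separate. That unproven multilinear identity \emph{is} the conjecture; everything else in your argument is packaging around it.

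Two further cautions. First, even the shape of the proposed functional equation needs care: already for $n+m=2$ one has $w_{2}(x,y|\rho)G(\rho)=(1-\rho^{2})(1-q\rho^{2})G(q\rho)$ rather than a clean $G(q\rho)$, because the numerator $(\rho^{2};q)_{\infty}$ also shifts; for more variables the numerator is an unknown polynomial of degree growing with $n+m$, so ``$\widehat G$ carries the same denominator structure'' must be formulated and proved, not assumed. Second, your alternative route --- integrating out one variable against the orthogonality weight in the spirit of Corollary \ref{gest} --- only projects onto lower-dimensional marginals and cannot by itself reconstruct the full $(n+m)$-variable denominator, since the recurrence (\ref{rek}) builds $w_{n+m}$ out of $w_{n+m-1}$ evaluated at $\cos(\alpha\pm\beta)$, i.e.\ at arguments that are not recovered by integration. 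In short: the low-dimensional cases are correctly verified and consistent with the paper, but the statement remains a conjecture after your argument just as it does in the paper.
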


\subsubsection{One-dimensional case}

Now we will present a one-dimensional example, in order to show that even in
this simplest case we obtain interesting identities. In this example, we will,
so to say, derive once more formula (\ref{ch1}). First of all, notice that
$(1-ae^{i\varphi})(1-ae^{-i\varphi})\allowbreak=\allowbreak1+a^{2}-2ax$
$\overset{df}{=}v(x|a)$ where $x\allowbreak=\allowbreak\cos\varphi$. Moreover,
we have:%
\[
W_{1}(x|\rho,q)=\prod_{j=0}^{\infty}v(x|\rho q^{j})\allowbreak=\allowbreak
(\rho e^{i\varphi})_{\infty}(\rho e^{-i\varphi})_{\infty}.
\]

Let us denote indirectly function $d_{n}(x|q)$ by the relationship: $\frac
{n!}{(q)_{n}}d_{n}(x|q)\allowbreak=\allowbreak\left.  \frac{d^{n}}{d\rho^{n}%
}W_{1}(x|\rho,q)\right\vert _{\rho=0}.$ Notice that $d_{n}(x|q)$ are
coefficients of the expansion of $W_{1}(x|\rho,q)$ in the following series
\begin{equation}
W_{1}(x|\rho,q)=\sum_{n\geq0}\frac{\rho^{n}}{(q)_{n}}d_{n}(x|q). \label{expW}%
\end{equation}
For the sake of symmetry let us also denote by $f_{n}(x|q)$ coefficients of
the expansion $1/W_{1}(x|\rho,q)$ in the following series
\[
1/W_{1}(x|\rho,q)\allowbreak=\allowbreak\sum_{n\geq0}\frac{\rho^{n}}{(q)_{n}%
}f_{n}(x|q).
\]

\begin{remark}
Let us recall polynomials $\left\{  b_{n}\right\}  $ defined in \cite{bms} and
later analyzed in \cite{Szab-rev}(2.43). These polynomials satisfy the
following three term recurrence :%
\[
b_{n+1}(x|q)=-2q^{n}xb_{n}(x|q)+q^{n-1}(1-q^{n})b_{n-1}(x|q),
\]
with $b_{-1}(x|q)\allowbreak=\allowbreak0,$ $b_{1}(x|q)\allowbreak
=\allowbreak1.$ Moreover, as it follows from \cite{SzablAW}(3.18) after some
trivial transformation polynomials $\left\{  b_{n}\right\}  $ satisfy the
following identity:%
\begin{equation}
\sum_{j=1}^{n}%
%TCIMACRO{\QATOPD{[}{]}{n}{j}}%
%BeginExpansion
\genfrac{[}{]}{0pt}{}{n}{j}%
%EndExpansion
_{q}b_{n-j}(x|q)h_{j+k}(x|q)=\left\{
\begin{array}
[c]{ccc}%
0 & if & k<n\\
(-1)^{n}q^{\binom{n}{2}}\frac{(q)_{k}}{(q)_{k-n}}h_{k-n}(x|q) & if & k\geq n
\end{array}
\right.  . \label{idb}%
\end{equation}

Recall also that the two families of polynomials $\left\{  h_{n}\right\}  $
and $\left\{  b_{n}\right\}  $ are related to one another by
\[
b_{n}(x|q)=(-1)^{n}q^{\binom{n}{2}}h_{n}(x|q^{-1}),
\]
for $q\neq0$ and for $q\allowbreak=\allowbreak0$ we have $b_{-1}%
(x|0)\allowbreak=\allowbreak b_{n}(x|0)\allowbreak=\allowbreak0$ for $n\geq3,$
$b_{1}(x|q)\allowbreak=\allowbreak-2x,$ $b_{2}(x|0)\allowbreak=\allowbreak1.$

In the sequel when considering the case $q\allowbreak=\allowbreak0$ we will
understand as the limit with $q\rightarrow0$ in the function in question.
\end{remark}

One can notice that, we have
\[
\frac{n!}{(q)_{n}}f_{n}(x|q)\allowbreak=\allowbreak\left.  \frac{d^{n}}%
{d\rho^{n}}W_{1}^{-1}(x|\rho,q)\right\vert _{\rho=0}.
\]
We have the following lemma.

\begin{lemma}
\label{1-dim}For $\left\vert x\right\vert \leq1,\left\vert q\right\vert <1,$
we have
\begin{align}
d_{n}(x|q)\allowbreak &  =\allowbreak b_{n}(x|q),\label{1dimb}\\
f_{n}(x|q)  &  =h_{n}(x|q). \label{1dimh}%
\end{align}

\end{lemma}

\begin{proof}
To prove (\ref{1dimb}) let us recall formula (1.7) of \cite{bms}.
\[
W_{1}(x|\rho,q)=\sum_{j\geq0}\frac{\rho^{j}}{(q)_{j}}b_{j}(x|q).
\]
To get (\ref{1dimh}) we recall (\ref{ch1}). The separate proof is needed for
the case $q\allowbreak=\allowbreak0.$ Then $W_{1}(x,\rho,0)\allowbreak
=\allowbreak v(x|\rho)\allowbreak=\allowbreak1-2x\rho+\rho^{2}$ which
confronted with our definition of polynomials $b_{n}$ for $q\allowbreak
=\allowbreak0$ shows that the (\ref{1dimb}) is true for this case also.
\end{proof}

Now we see that following, adapted to the present situation, formula
(\ref{diff}) we have, for $\left\vert q\right\vert ,\left\vert \rho\right\vert
<1 $ and $\left\vert x\right\vert \leq1$.
\begin{align*}
\chi_{1}^{t}(x|\rho,q)\allowbreak &  =\allowbreak\sum_{j=0}^{\infty}\frac
{\rho^{j}}{(q)_{j}}h_{t+j}(x|q)\allowbreak=\allowbreak\frac{1}{W_{1}%
(x|\rho,q)}\\
&  \times\sum_{j=0}^{\infty}\rho^{j}\sum_{m=0}^{j}\frac{1}{(j-m)!}%
\frac{(j-m)!}{(q)_{j-m}(q)_{m}}b_{j-m}(x|q)h_{m+t}(x|q)\\
&  =\allowbreak\frac{1}{W_{1}(x|\rho,q)}\sum_{j=0}^{\infty}\frac{\rho^{j}%
}{(q)_{j}}\sum_{m=0}^{j}%
%TCIMACRO{\QATOPD{[}{]}{j}{m}}%
%BeginExpansion
\genfrac{[}{]}{0pt}{}{j}{m}%
%EndExpansion
_{q}b_{j-m}(x|q)h_{m+t}(x|q)\\
&  =\frac{1}{W_{1}(x|\rho,q)}\sum_{j=0}^{t}%
%TCIMACRO{\QATOPD{[}{]}{t}{j}}%
%BeginExpansion
\genfrac{[}{]}{0pt}{}{t}{j}%
%EndExpansion
_{q}(-\rho)^{j}q^{\binom{j}{2}}h_{t-j}(x|q).
\end{align*}
In particular, for $t\allowbreak=\allowbreak0,$ we get once more formula
(\ref{ch1}). This can be regarded as yet another prove of this formula since
we started from (\ref{expW}).

\subsubsection{Two-dimensional case}

Again, as before, let us denote\newline$\frac{n!}{(q)_{n}}d_{n}^{(2)}%
(x,y|q)\allowbreak=\allowbreak\left.  \frac{d^{n}}{d\rho^{n}}W_{2}%
(x,y|\rho,q)\right\vert _{\rho=0},$ $\frac{n!}{(q)_{n}}f_{n}^{(2)}%
(x,y|q)\allowbreak=\allowbreak\left.  \frac{d^{n}}{d\rho^{n}}W_{2}%
^{-1}(x,y|\rho,q)\right\vert _{\rho=0}$, where $W_{2}(x,y|\rho,q)\allowbreak
=\allowbreak\prod_{j=0}^{\infty}w_{2}(x,y|\rho q^{j}),$ with $w_{2}(x,y|a)$
defined by (\ref{w2}).

\begin{lemma}
\label{2-dim}For $\theta,\varphi\in\lbrack0,2\pi),\left\vert q\right\vert <1,$
we have
\begin{align}
d_{n}^{(2)}(\cos\theta,\cos\varphi|q)\allowbreak &  =\allowbreak\sum_{m=0}^{n}%
%TCIMACRO{\QATOPD{[}{]}{n}{m}}%
%BeginExpansion
\genfrac{[}{]}{0pt}{}{n}{m}%
%EndExpansion
_{q}b_{m}(\cos(\theta+\varphi)|q)b_{n-m}(\cos(\theta-\varphi)|q),\label{d2b}\\
f_{n}^{(2)}(\cos\theta,\cos\varphi|q)  &  =\sum_{m=0}^{n}%
%TCIMACRO{\QATOPD{[}{]}{n}{m}}%
%BeginExpansion
\genfrac{[}{]}{0pt}{}{n}{m}%
%EndExpansion
_{q}h_{m}(\cos(\theta+\varphi)|q)h_{n-m}(\cos(\theta-\varphi)|q). \label{d2h}%
\end{align}

\end{lemma}

\begin{proof}
First of all, notice that $w_{2}(\cos\theta,\cos\varphi|\rho)$ can be
decomposed as
\begin{equation}
w_{2}(\cos\theta,\cos\varphi|\rho)\allowbreak=\allowbreak w_{1}(\cos
(\theta+\varphi)|\rho)(w_{1}(\theta-\varphi)|\rho) \label{w22}%
\end{equation}
hence, taking into account Leibniz rule, we get:
\begin{gather*}
d_{n}^{(2)}(x,y|q)=\frac{(q)_{n}}{n!}\left.  \frac{d^{n}}{d\rho^{n}}%
(W_{1}(\cos(\theta+\varphi)|\rho,q)W_{1}(\cos(\theta-\varphi)|\rho
,q))\right\vert _{\rho=0}\\
=\frac{(q)_{n}}{n!}\sum_{m=0}^{n}\binom{n}{m}\left.  \frac{d^{m}}{d\rho^{m}%
}(W_{1}(\cos(\theta+\varphi)|\rho,q)\right\vert _{\rho=0}\left.  \frac
{d^{n-m}}{d\rho^{n-m}}(W_{1}(\cos(\theta-\varphi)|\rho,q)\right\vert _{r=0}\\
=\frac{(q)_{n}}{n!}\sum_{m=0}^{n}\binom{n}{m}\frac{m!}{(q)_{m}}b_{m}%
(\cos(\theta+\varphi)|q)\frac{(n-m)!}{(q)_{n-m}}b_{n-m}(\cos(\theta
-\varphi)|q).
\end{gather*}
To get (\ref{d2h}), we argue in a similar way using Lemma \ref{1-dim} on the way.
\end{proof}

\begin{theorem}
\label{wazny}We have for $\left\vert x\right\vert ,\left\vert y\right\vert
,\left\vert q\right\vert \in\mathbb{R}$ and all $n\geq0:$%
\begin{gather}
d_{n}^{(2)}(x,y|q)=\label{con1}\\
(-1)^{n}\sum_{j=0}^{\left\lfloor n/2\right\rfloor }(-1)^{j}q^{-\binom{n-2j}%
{2}-j+\binom{j}{2}}\frac{(q)_{n}}{(q)_{j}(q)_{n-2j}}b_{n-2j}(x|q)b_{n-2j}%
(y|q),\nonumber\\
f_{n}^{(2)}(x,y|q)\allowbreak=\allowbreak\sum_{j=0}^{\left\lfloor
n/2\right\rfloor }\frac{(q)_{n}}{(q)_{j}(q)_{n-2j}}h_{n-2j}(x|q)h_{n-2j}(y|q).
\label{con2}%
\end{gather}

\end{theorem}

\begin{proof}
Is shifted to Section \ref{dow}.
\end{proof}

\begin{remark}
Notice that, in accordance with our agreement that the case $q\allowbreak
=\allowbreak0$ will be understood as the limit when $q\rightarrow0,$ we have
$d_{0}^{(2)}(x,y|0)\allowbreak=\allowbreak1,$ $d_{1}^{(2)}(x,y|0)\allowbreak
=\allowbreak-4xy$, $d_{2}^{(2)}(x,y|0)\allowbreak=\allowbreak4(x^{2}%
+y^{2})-2,$ $d_{3}^{(2)}(x,y|0)\allowbreak=\allowbreak-4xy,$ $d_{4}%
^{(2)}(x,y|0)\allowbreak=\allowbreak1$, $d_{n}^{(2)}(x,y|0)\allowbreak
=\allowbreak0$ for all $n\geq4$.
\end{remark}

As a corollary we get the following interesting nontrivial identity involving
polynomials $\left\{  b_{n}\right\}  $ and $\left\{  h_{n}\right\}  .$

\begin{corollary}
For all complex $x,y,q,$ $k\geq0$ and $t,s\in\mathbb{N\cup}\{0\},$we get%
\begin{equation}
\sum_{m=0}^{k}%
%TCIMACRO{\QATOPD{[}{]}{k}{m}}%
%BeginExpansion
\genfrac{[}{]}{0pt}{}{k}{m}%
%EndExpansion
_{q}d_{m}^{(2)}(x,y|q)h_{k-m+t}(x|q)h_{k-m+s}(y|q)=P_{t,s}^{(k)}(x,y|q)
\label{sumk}%
\end{equation}
where $P_{t,s}^{(k)}(x,y|q)$ is a polynomial of order $t+s$ in $x$ and $y.$

In particular, we have%
\begin{equation}
\sum_{m=0}^{k}%
%TCIMACRO{\QATOPD{[}{]}{k}{m}}%
%BeginExpansion
\genfrac{[}{]}{0pt}{}{k}{m}%
%EndExpansion
_{q}d_{m}^{(2)}(x,y|q)h_{k-m}(x|q)h_{k-m}(y|q)\allowbreak=\left\{
\begin{array}
[c]{ccc}%
0 & if & k\text{ is odd}\\
(-1)^{l}q^{\binom{l}{2}}(q^{l+1})_{l} & if & k=2l
\end{array}
\right.  . \label{00k}%
\end{equation}

\end{corollary}

\begin{proof}
Knowing that
\[
\sum_{j=0}^{\infty}\frac{\rho^{j}}{(q)_{j}}h_{j+t}(x|q)h_{j+s}(y|q)\allowbreak
=\allowbreak\frac{(\rho^{2})_{\infty}V_{t,s}(x,y|\rho,q)}{W_{2}(x,y|\rho,q)},
\]
for $t,s\in\mathbb{N\cup}\{0\}$, which is a modification of the formula given
in assertion i) of Lemma 3 in \cite{SzablAW}, where $V_{t,s}(x,y|\rho,q)$
denotes certain polynomial of the degree $t+s$ in $x$ and $y$, our expansion
of $W_{2}(x,y|\rho,q)$ and then applying Cauchy multiplication of series get
the identity
\begin{equation}
\sum_{j=0}^{\infty}\frac{\rho^{j}}{(q)_{j}}\sum_{m=0}^{j}%
%TCIMACRO{\QATOPD{[}{]}{j}{m}}%
%BeginExpansion
\genfrac{[}{]}{0pt}{}{j}{m}%
%EndExpansion
d_{m}^{(2)}(x,y|q)h_{j-m+t}(x|q)h_{j-m+s}(y|q)\allowbreak=\allowbreak
V_{t,s}(x,y|\rho,q)(\rho^{2})_{\infty}, \label{id2wym}%
\end{equation}
true for all $\left\vert x\right\vert ,\left\vert y\right\vert \leq1,$
$\left\vert \rho\right\vert ,\left\vert q\right\vert <1.$ Now knowing the form
of the polynomial $V_{t,s}$ given either in \cite{SzablAW}, \cite{SzabP-M} or
\cite{Szab-rev}, we deduce that the expansion of the polynomial $V_{t,s}$ in
the power series of $\rho$ is of a form of the sum of infinite power series
only in $\rho$ times polynomials of $x$ and $y$ of order at most $t+s$. Hence
it is of the form of the power series in $\rho$ with coefficients being
polynomials in $x$ and $y$ of order at most $t+s.$ Since the linear
combination of polynomials of order $t+s$ is a polynomial of order $t+s.$ A
similar argument can be applied to the product $V_{t,s}(x,y|\rho,q)(\rho
^{2})_{\infty}.$ Now comparing the coefficients of the powers of $\rho$ on the
two sides of (\ref{id2wym}), one proves the first part of the statement.

Now knowing that $V_{0,0}\allowbreak=\allowbreak1$, expanding $\left(
\rho^{2}\right)  _{\infty}$ in a standard way and finally comparing
coefficients by equal powers of $\rho$ we arrive to (\ref{sumk}).
\end{proof}

\section{Proofs\label{dow}}

\begin{proof}
[Proof of Proposition \ref{ilocz}.]We will be using well known formulae for
the product of sines and cosines. The proof is by induction. For
$n\allowbreak=\allowbreak1$ and $k\allowbreak=\allowbreak1$ we have in case of
(\ref{cos}) and $k\allowbreak=\allowbreak0$ $\cos(\alpha)\allowbreak
=\allowbreak\frac{1}{2}(\cos(\alpha)+\cos(-\alpha))$ while in case of
(\ref{sin}) we get%
\begin{gather*}
\sin(\alpha_{1})\cos(\alpha_{2})=\frac{-1}{4}(\sin(-\alpha_{1}-\alpha
_{2})\allowbreak+\allowbreak\sin(-\alpha_{1}+\alpha_{2})-\sin(\alpha
_{1}-\alpha_{2})-\sin\left(  \alpha_{1}+\alpha_{2})\right) \\
=\frac{1}{2}(\sin(\alpha_{1}+\alpha_{2})+\sin(\alpha_{1}-\alpha_{2})).
\end{gather*}
Hence, let us assume that they are true for $n\allowbreak=\allowbreak m.$

In the case of the first one, we have
\begin{gather*}
\prod_{j=1}^{m+1}\cos(\xi_{j})\allowbreak=\allowbreak\cos(\xi_{m+1}%
)\prod_{j=1}^{m}\cos(\xi_{j})\allowbreak=\allowbreak\\
\frac{1}{2^{m}}\sum_{i_{1}\in\{-1,1\}}...\sum_{i_{m}\in\{-1,1\}}\cos
(\sum_{k=1}^{m}i_{k}\xi_{k})\cos(\xi_{m+1})\allowbreak\\
=\allowbreak\newline\frac{1}{2^{m+1}}\allowbreak\times\allowbreak\sum
_{i_{1}\in\{-1,1\}}...\sum_{i_{m}\in\{-1,1\}}(\cos(\sum_{k=1}^{m}i_{k}\xi
_{k}+\xi_{m+1})\allowbreak+\allowbreak\cos(\sum_{k=1}^{m}i_{k}\xi_{k}%
-\xi_{m+1})).
\end{gather*}

Along the way we used the fact that $\cos(\alpha)\cos(\beta)\allowbreak
=\allowbreak(\cos(\alpha-\beta)+\cos(\alpha+\beta))/2.$ Let us also observe
that the product $\prod_{j=1}^{m}\cos(\xi_{j})$ is a sum of cosines of a
certain linear combination of arguments $\xi_{j},$ $j\allowbreak
=\allowbreak1,\ldots,m$ multiplied by $2^{m-1}.$

In the case of the second one we first consider the case of $k\allowbreak
=\allowbreak0$. Assuming that $m$ is even we get:
\begin{gather*}
\prod_{j=1}^{m+1}\sin(\xi_{j})=\allowbreak\sin(\xi_{m+1})\prod_{j=1}^{m}%
\sin(\xi_{j})\allowbreak=\allowbreak(-1)^{m/2}\frac{1}{2^{m}}\allowbreak
\times\allowbreak\\
\sum_{i_{1}\in\{-1,1\}}...\sum_{i_{m}\in\{-1,1\}}(-1)^{\sum_{k=1}^{m}%
(i_{k}+1)/2}\cos(\sum_{k=1}^{m}i_{k}\xi_{k})\sin(\xi_{m+1})\allowbreak\\
=(-1)^{m/2}\frac{1}{2^{m+1}}\allowbreak\sum_{i_{1}\in\{-1,1\}}...\sum
_{i_{m}\in\{-1,1\}}(-1)^{\sum_{k=1}^{m}(i_{k}+1)/2}\allowbreak\times\\
(\sin(\sum_{k=1}^{m}i_{k}\xi_{k}\allowbreak+\allowbreak\xi_{m+1}%
)\allowbreak-\allowbreak\sin(\sum_{k=1}^{m}i_{k}\xi_{k}\allowbreak
-\allowbreak\xi_{m+1}))\allowbreak=\\
-(-1)^{m/2}\frac{1}{2^{m+1}}\sum_{i_{m+1}\in\{-1\}}\sum_{i_{1}\in
\{-1,1\}}...\sum_{i_{m}\in\{-1,1\}}(-1)^{\sum_{k=1}^{m+1}(i_{k}+1)/2}\sin
(\sum_{k=1}^{m+1}i_{k}\xi_{k})\allowbreak\\
-(-1)^{m/2}\frac{1}{2^{m+1}}\sum_{i_{m+1}\in\{-1\}}\sum_{i_{1}\in
\{-1,1\}}...\sum_{i_{m}\in\{-1,1\}}(-1)^{\sum_{k=1}^{m+1}(i_{k}+1)/2}\sin
(\sum_{k=1}^{m+1}i_{k}\xi_{k}).
\end{gather*}
$\allowbreak$\newline We used the fact that $\sin(\alpha)\cos(\beta
)\allowbreak=\allowbreak(\sin(\alpha-\beta)+\sin(\alpha+\beta))/2.$ The case
of $m$ odd is treated in the similar way.

Now to consider general case we expand both products of sines and cosines.
\end{proof}

\begin{proof}
[Proof of Lemma \ref{aux1}](\ref{ksc}) Using the Euler's identity $\cos
(\theta)\allowbreak=\allowbreak(e^{i\theta}\allowbreak+\allowbreak
e^{-i\theta})/2$ we get
\[
\cos(\beta+\sum_{j=1}^{n}k_{j}\alpha_{j})\allowbreak=\allowbreak\exp
(i\beta+\sum_{j=1}^{n}ik_{j}\alpha_{j})/2\allowbreak+\allowbreak\exp
(-i\beta-\sum_{j=1}^{n}ik_{j}\alpha_{j})/2.
\]
So
\[
\sum_{k_{1}\geq0}...\sum_{k_{n}\geq0}(\prod_{j=1}^{n}\rho_{j}^{k_{j}}%
)\exp(i\beta+\sum_{j=1}^{n}ik_{j}\alpha_{j})/2\allowbreak=\allowbreak\frac
{1}{2}\exp(i\beta)\prod_{j=1}^{n}\frac{1}{1-\rho_{j}\exp(i\alpha_{j})}.
\]
Similarly:
\[
\sum_{k_{1}\geq0}...\sum_{k_{n}\geq0}(\prod_{j=1}^{n}\rho_{j}^{k_{j}}%
)\exp(-i\beta-\sum_{j=1}^{n}ik_{j}\alpha_{j})/2\allowbreak=\allowbreak\frac
{1}{2}\exp(-i\beta)\prod_{j=1}^{n}\frac{1}{1-\rho_{j}\exp(-i\alpha_{j})}.
\]
Thus%
\begin{gather*}
\sum_{k_{1}\geq0}...\sum_{k_{n}\geq0}(\prod_{j=1}^{n}\rho_{j}^{k_{j}}%
)\cos(\beta+\sum_{j=1}^{n}ik_{j}\alpha_{j})\\
=\frac{\exp(i\beta)\prod_{j=1}^{n}(1-\rho_{j}\exp(-i\alpha_{j}))+\exp
(-i\beta)\prod_{j=1}^{n}(1-\rho_{j}\exp(i\alpha_{j}))}{2\prod_{j=1}^{n}%
(1+\rho_{j}^{2}-2\rho_{j}\cos(\alpha_{j}))}.
\end{gather*}
Now, notice that
\begin{gather*}
\exp(-i\beta)\prod_{j=1}^{n}(1-\rho_{j}\exp(i\alpha_{j}))\allowbreak=\\
\allowbreak\sum_{j=1}^{n}(-1)^{j}\sum_{M_{j,n}\subseteq S_{n}}\prod_{k\in
M_{j,n}}\rho_{k}\exp(-i\beta+i\sum_{k\in M_{j,n}}\alpha_{k}).
\end{gather*}

To verify (\ref{kss}), we use the fact that $\sin(\theta)\allowbreak
=\allowbreak(e^{i\theta}\allowbreak-\allowbreak e^{-i\theta})/2$ getting%
\[
\sin(\beta+\sum_{j=1}^{n}k_{j}\alpha_{j})\allowbreak=\allowbreak\exp
(i\beta+\sum_{j=1}^{n}ik_{j}\alpha_{j})/2i\allowbreak-\newline\allowbreak
\exp(-i\beta-\sum_{j=1}^{n}ik_{j}\alpha_{j})/2i.
\]
So we have:
\[
\sum_{k_{1}\geq0}...\sum_{k_{n}\geq0}(\prod_{j=1}^{n}\rho_{j}^{k_{i}}%
)\exp(i\beta+i\sum_{j=1}^{n}k_{j}\alpha_{j})/2i\allowbreak=\allowbreak
\exp(i\beta)\frac{1}{2i}\prod_{j=1}^{n}\frac{1}{1-\rho_{j}\exp(i\alpha_{j})}.
\]
Similarly we get\newline%
\[
\sum_{k_{1}\geq0}...\sum_{k_{n}\geq0}(\prod_{j=1}^{n}\rho_{j}^{k_{j}}%
)\exp(-i\beta-i\sum_{j=1}^{n}k_{j}\alpha_{j})/2i\allowbreak=\allowbreak
\exp(-i\beta)\frac{1}{2i}\prod_{j=1}^{n}\frac{1}{1-\rho_{j}\exp(-i\alpha_{j}%
)}.
\]
So
\begin{gather*}
\sum_{k_{1}\geq0}...\sum_{k_{n}\geq0}(\prod_{j=1}^{n}\rho_{j}^{k_{j}}%
)\sin(\beta+\sum_{j=1}^{n}k_{j}\alpha_{j})\allowbreak=\allowbreak\\
\frac{1}{2i}\frac{\exp(i\beta)\prod_{j=1}^{n}(1-\rho_{j}\exp(-i\alpha
_{j}))-\exp(-i\beta)\prod_{j=1}^{n}(1-\rho_{j}\exp(i\alpha_{j}))}{\prod
_{j=1}^{n}(1+\rho_{j}^{2}-2\rho_{j}\cos(\alpha_{j}))}\allowbreak.
\end{gather*}

\end{proof}

\begin{proof}
[Proof of Theorem \ref{main}]The proof is based on the following observation.
First one is that we convert products Chebyshev polynomials to the products of
$\sin(j\alpha_{s}+(t_{s}+1)\alpha_{s})$ and $\cos(j\alpha_{s}+t_{s}\alpha
_{s})$ according to (\ref{Czebysz}). Secondly we change these products to sums
of either cosines if $n$ is even or zero or sines if $n$ is odd according to
the assertion of the Proposition \ref{ilocz}. The arguments of these sines and
cosines are the linear combinations of the arguments of sines and cosines that
were participating in the products. The coefficients of these linear
combinations are $j\geq0$ and $i_{m}\in\left\{  -1,1\right\}  ,$
$m\allowbreak=\allowbreak1,\ldots,n+k.$ Thus we can sum first with respect to
$j$ and apply Corollary \ref{suma}. There the r\^{o}le of $\alpha$ plays now
$\sum_{s=1}^{k+n}$ $i_{s}\alpha_{s}$ for chosen combination of $i^{\prime}s$
while the r\^{o}le of $\beta$ similar combination $\sum_{s=1}^{n}i_{s}%
(t_{s}+1)\alpha_{s}+\allowbreak+\allowbreak\sum_{s=n+1}^{n+k}i_{s}t_{s}%
\alpha_{s}.$ The point is that the sum of such sines or cosines with respect
to $j,$ is a ratio of two trigonometric expressions. Moreover all these the
expressions in the denominators depend only on $\sum_{s=1}^{k+n}$ $i_{s}%
\alpha_{s},$ i.e. do not depend on indeces $t_{s}$ (note that denominators of
sums in Corollary \ref{suma} do not depend on $\beta$). For $\alpha_{s}%
\in\mathbb{R}$, $t_{s}\in\mathbb{Z}$, $s=1,...,n+k$, $\left\vert
\rho\right\vert <1$ we have, depending on the parity of $n$, the following equations.

If $n$ is odd then,%
\begin{gather}
\sum_{j\geq0}\rho^{j}\prod_{s=1}^{n}U_{j+t_{s}}(\cos(\alpha_{s}))\prod
_{s=n+1}^{n+k}T_{j+t_{s}}(\cos(\alpha_{s}))=\label{si}\\
\frac{(-1)^{(n+1)/2}}{2^{n+k}\prod_{i=1}^{n}\sin(\alpha_{i})}\sum_{i_{1}%
\in\{-1,1\}}...\sum_{i_{n+k}\in\{-1,1\}}(-1)^{\sum_{k=1}^{n}(i_{k}+1)/2}%
\times\nonumber\\
\frac{(\sin(\sum_{s=1}^{n}i_{s}(t_{s}+1)\alpha_{s}+\sum_{s=n+1}^{n+k}%
i_{s}t_{s}\alpha_{s})-\rho\sin(\sum_{s=1}^{n}i_{s}t_{s}\alpha_{s}+\sum
_{s=n+1}^{n+k}i_{s}(t_{s}-1)\alpha_{s}))}{(1-2\rho\cos(\sum_{s=1}^{n+k}%
i_{s}\alpha_{s})+\rho^{2})},\nonumber
\end{gather}
while, when $n$ is even or zero, we get:$\allowbreak$\newline%
\begin{gather}
\sum_{j\geq0}\rho^{j}\prod_{s=1}^{n}U_{j+t_{s}}(\cos(\alpha_{s}))\prod
_{s=n+1}^{n+k}T_{j+t_{s}}(\cos(\alpha_{s}))=\label{chi}\\
\frac{(-1)^{n/2}}{2^{n+k}\prod_{i=1}^{n}\sin(\alpha_{i})}\sum_{i_{1}%
\in\{-1,1\}}...\sum_{i_{n+k}\in\{-1,1\}}(-1)^{\sum_{k=1}^{n}(i_{k}+1)/2}%
\times\nonumber\\
\frac{\cos(\sum_{s=1}^{n}i_{s}(t_{s}+1)\alpha_{s}+\sum_{s=n+1}^{n+k}i_{s}%
t_{s}\alpha_{s})-\rho\cos(\sum_{s=1}^{n}i_{s}t_{s}\alpha_{s}+\sum
_{s=n+1}^{n+k}i_{s}(t_{s}-1)\alpha_{s})}{(1-2\rho\cos(\sum_{s=1}^{n+k}%
i_{s}\alpha_{s})+\rho^{2})}.\nonumber
\end{gather}

To justify it, we use (\ref{Czebysz}) first, then based on Proposition
\ref{ilocz}, we convert products to sums of sines or cosines (if $n$ is odd
sines if $n$ is even cosines) that are of the following arguments:
\begin{gather*}
\sum_{s=1}^{n}l_{s}((j+1)\alpha_{s}+t_{s}\alpha_{s})\allowbreak+\allowbreak
\sum_{s=n+1}^{n+k}l_{s}(j\alpha_{s}+t_{s}\alpha_{s})\allowbreak\\
=\allowbreak j\sum_{s=1}^{n+k}l_{s}\alpha_{s}+\sum_{s=1}^{n}l_{s}%
(t_{s}+1)\alpha_{s}+\sum_{s=n+1}^{n+k}l_{s}t_{s}\alpha_{s}.
\end{gather*}
Then, we change the order of summation and we sum over $j$ first. We identify
"$\alpha$" with $\sum_{s=1}^{n+k}l_{s}\alpha_{s}$ and "$\beta$" with
$\sum_{s=1}^{n}l_{s}(t_{s}+1)\alpha_{s}\allowbreak+\allowbreak\sum
_{s=n+1}^{n+k}l_{s}t_{s}\alpha_{s}$ and apply formulae (\ref{s_si} or
\ref{s_g_c}) depending on on the case of parity of $n$.

Now let us analyze polynomial $w_{n}.$ Notice that denominator in both
(\ref{si}) and (\ref{chi}) is of the form
\begin{gather}
w_{k+n}(\cos(\alpha_{1}),...,\cos(\alpha_{k+n})|\rho)=\label{wkn}\\
\prod_{i_{1}\in\{-1,1\}}...\prod_{i_{k+n}\in\{-1,1\}}(1-2\rho\cos(\sum
_{s=1}^{n+k}i_{s}\alpha_{s})+\rho^{2}).\nonumber
\end{gather}
To get (\ref{wkn}) we will argue by induction. Let us replace $n+k$ by $m$ to
avoid confusion. To start with $m\allowbreak=\allowbreak1$ for $m\allowbreak
=\allowbreak2$ we recall (\ref{pro2}). Hence (\ref{wkn}) is true for
$m\allowbreak=\allowbreak1,2.$

Let us assume that the formula is true for $m\allowbreak=\allowbreak k+1.$
Hence, taking $\alpha\allowbreak=\allowbreak\alpha_{k+1}$ and $\beta
\allowbreak=\allowbreak\sum_{s=1}^{k}i_{s}\alpha_{s}$ and noting that
$i_{k}^{2}\allowbreak=\allowbreak1$we get:%
\begin{gather*}
w_{k+1}(\cos(\alpha_{1}),...,\cos(\alpha_{k+1})|\rho)=\\
\prod_{i_{2}\in\{-1,1\}}...\prod_{i_{k}\in\{-1,1\}}((1-2\rho\cos(\sum
_{s=1}^{k-1}i_{s}\alpha_{s}+i_{k}(\alpha_{k}-i_{k}\alpha_{k+1}))+\rho^{2})\\
\times(1-2\rho\cos(\sum_{s=1}^{k-1}i_{s}\alpha_{s}+i_{k}(\alpha_{k}%
+i_{k}\alpha_{k+1}))+\rho^{2}))\\
=w_{k}(\cos(\alpha_{1}),...,\cos(\alpha_{k}+\alpha_{k+1})|\rho)w_{k}%
(\cos(\alpha_{1}),...,\cos(\alpha_{k}-\alpha_{k+1})|\rho).
\end{gather*}
by induction assumption. Now it is elementary to see that polynomials $w_{n}$
satisfy relationship (\ref{rek}). Similarly, the remarks concerning degree of
symmetry and the degree of polynomials $w_{n}$ follow directly (\ref{wkn}).

Now, let us multiply both sides of (\ref{si}) and (\ref{chi}) by
$w_{n+k}(x_{1},...,x_{n+k}|\rho)$. We see that this product is equal to the
right hand sides of these equalities with an obvious replacement $\cos
(\alpha_{s})->x_{s},$ $s=1,...,n+k.$ Inspecting (\ref{si}) and (\ref{chi}), we
notice that these right hand sides are polynomials of degree $2(2^{n+k-1}%
-1)+1\allowbreak=\allowbreak2^{n+k}-1$ in $\rho.$ Thus, these polynomials can
be regained by using well known formula:
\[
p_{n}(x)\allowbreak=\allowbreak\sum_{i=0}^{n}x^{n}a_{n}\allowbreak
=\allowbreak\sum_{j=0}^{n}\frac{x^{j}}{j!}\left.  \frac{d^{j}}{dx^{j}}%
p_{n}(x)\right\vert _{x=0}.
\]
This leads directly to the differentiation of the products of $w_{n+k}%
(x_{1},...,x_{n+k}|\rho)$ and right hand side of (\ref{_ktnu}). Now we apply
the Leibniz formula:%
\[
\left.  \frac{d^{n}}{dx^{n}}[f(x)g(x)]\right\vert _{x=0}=\sum_{j=0}^{n}%
\binom{n}{i}\left.  \frac{d^{j}}{dx^{j}}f(x)\right\vert _{x=0}\left.
\frac{d^{n-j}}{dx^{n-j}}g(x)\right\vert _{x=0}.
\]
and notice that
\[
\left.  \frac{d^{k}}{d\rho^{k}}\sum_{j\geq0}\rho^{j}\prod_{s=1}^{n}T_{j+t_{s}%
}(x_{s})\prod_{s=1+n}^{n+k}U_{j+t_{s}}(x_{s})\right\vert _{\rho=0}%
=k!\prod_{s=1}^{n}T_{k+t_{s}}(x_{s})\prod_{s=1+n}^{n+k}U_{k+t_{s}}(x_{s}).
\]
Having this we get directly (\ref{formula}).
\end{proof}

\begin{proof}
[ Proof of the Theorem \ref{wazny}]The proof consists of several steps. First,
we prove that for all $\theta,\varphi\in\mathbb{R}$ we have%
\begin{equation}
\sum_{m=0}^{n}%
%TCIMACRO{\QATOPD{[}{]}{n}{m}}%
%BeginExpansion
\genfrac{[}{]}{0pt}{}{n}{m}%
%EndExpansion
_{q}h_{m}(\cos(\theta+\varphi)|q)h_{n-m}(\cos(\theta-\varphi)|q)=\sum
_{j=0}^{\left\lfloor n/2\right\rfloor }\frac{(q)_{n}}{(q)_{j}(q)_{n-2j}%
}h_{n-2j}(\cos\theta|q)h_{n-2j}(\cos\varphi|q). \label{exKM}%
\end{equation}
This formula follows, firstly from the fact that we have
\[
\left.  \frac{d^{n}}{d\rho^{n}}W_{1}^{-1}(x|\rho,q)\right\vert _{\rho=0}%
=\frac{n!}{(q)_{n}}h_{n}(x|q),
\]
which follows directly from (\ref{ch1}). Secondly, arguing in the similar way
as in the proof of Lemma \ref{2-dim} we deduce that
\begin{align*}
&  \left.  \frac{d^{n}}{d\rho^{n}}W_{1}^{-1}(\cos(\theta+\varphi)|\rho
,q)W_{1}^{-1}(\cos(\theta-\varphi)|\rho,q)\right\vert _{\rho=0}\\
&  =\frac{n!}{(q)_{n}}\sum_{m=0}^{n}%
%TCIMACRO{\QATOPD{[}{]}{n}{m}}%
%BeginExpansion
\genfrac{[}{]}{0pt}{}{n}{m}%
%EndExpansion
_{q}h_{m}(\cos(\theta+\varphi)|q)h_{n-m}(\cos(\theta-\varphi)|q).
\end{align*}
Thirdly, we notice that
\[
\frac{1}{W_{1}(\cos(\theta+\varphi)|\rho,q)W_{1}(\cos(\theta-\varphi)|\rho
,q)}\allowbreak=\allowbreak\frac{1}{W_{2}(\cos(\theta),\cos(\varphi)|\rho
,q)},
\]
which follows directly from (\ref{w22}).

Now, let us calculate
\[
\sum_{n\geq0}\frac{\rho^{n}}{(q)_{n}}\sum_{j=0}^{\left\lfloor n/2\right\rfloor
}\frac{(q)_{n}}{(q)_{j}(q)_{n-2j}}h_{n-2j}(\cos(\theta)|q)h_{n-2j}%
(\cos(\varphi)|q).
\]
After changing the order of summation, we get%
\[
\sum_{j\geq0}\frac{\rho^{2j}}{(q)_{j}}\sum_{n\geq2j}\frac{\rho^{n-2j}%
}{(q)_{n-2j}}h_{n-2j}(\cos(\theta)|q)h_{n-2j}(\cos(\varphi)|q)\allowbreak
=\allowbreak\frac{1}{(\rho^{2})_{\infty}}\frac{(\rho^{2})_{\infty}}{W_{2}%
(\cos(\theta),\cos(\varphi)|\rho,q)},
\]
by the binomial and Poisson-Mehler summation theorems. Thus we have proved
(\ref{exKM}) as well as (\ref{con2}) at least for $\left\vert q\right\vert
<1.$ The formula can be easily extended to all values of $q\neq1$ since both
sides are polynomials in $q$. Similarly, we can extend it to all values of $x$
and $y$ by substitution $\cos(\theta)$ by $x$ and $\cos(\varphi)$ by $y.$ Now,
having proven (\ref{exKM}) we recall the definition of polynomials
$b_{n}(x|q)$ given in Lemma \ref{1-dim}, above. Recall also that
\[
(\frac{1}{q}|\frac{1}{q})_{n}=(-1)^{n}q^{-\binom{n+1}{2}}(q)_{n},
\]
and consequently that we have:
\[%
%TCIMACRO{\QATOPD{[}{]}{n}{j}}%
%BeginExpansion
\genfrac{[}{]}{0pt}{}{n}{j}%
%EndExpansion
_{1/q}=%
%TCIMACRO{\QATOPD{[}{]}{n}{j}}%
%BeginExpansion
\genfrac{[}{]}{0pt}{}{n}{j}%
%EndExpansion
_{q}q^{-j(n-j)}.
\]
Hence, for the left hand side of (\ref{exKM}), we have after changing $q$ to
$1/q$%
\begin{gather*}
\sum_{m=0}^{n}%
%TCIMACRO{\QATOPD{[}{]}{n}{m}}%
%BeginExpansion
\genfrac{[}{]}{0pt}{}{n}{m}%
%EndExpansion
_{1/q}h_{m}(\cos(\theta+\varphi)|\frac{1}{q})h_{n-m}(\cos(\theta
-\varphi)|\frac{1}{q})\\
=\sum_{m=0}^{n}%
%TCIMACRO{\QATOPD{[}{]}{n}{m}}%
%BeginExpansion
\genfrac{[}{]}{0pt}{}{n}{m}%
%EndExpansion
_{q}q^{-m(n-m)}(-1)^{m}q^{-\binom{m}{2}}\\
\times b_{m}(\cos(\theta+\varphi)|q)(-1)^{n-m}q^{-\binom{n-m}{2}}b_{n-m}%
(\cos(\theta-\varphi)|q)\\
=(-1)^{n}q^{-\binom{n}{2}}\sum_{m=0}^{n}%
%TCIMACRO{\QATOPD{[}{]}{n}{m}}%
%BeginExpansion
\genfrac{[}{]}{0pt}{}{n}{m}%
%EndExpansion
_{q}b_{m}(\cos(\theta+\varphi)|q)b_{n-m}(\cos(\theta-\varphi)|q).
\end{gather*}
Now let us consider the right hand side of (\ref{exKM}) and change $q$ by
$1/q.$ We have
\begin{gather*}
\sum_{j=0}^{\left\lfloor n/2\right\rfloor }\frac{(q^{-1}|q^{-1})_{n}}%
{(q^{-1}|q^{-1})_{j}(q^{-1}|q^{-1})_{n-2j}}h_{n-2j}(x|q^{-1})h_{n-2j}%
(y|q^{-1})\\
=\sum_{j=0}^{\left\lfloor n/2\right\rfloor }\frac{(q)_{n}(-1)^{n}%
q^{-\binom{n+1}{2}}}{(q)_{j}(-1)^{j}q^{-\binom{j+1}{2}}(q)_{n-2j}%
(-1)^{n-2j}q^{-\binom{n-2j+1}{2}}}(-1)^{n-2j}\\
\times q^{-\binom{n-2j}{2}}b_{n-2j}(x|q)(-1)^{n-2j}q^{-\binom{n-2j}{2}%
}b_{n-2j}(y|q).
\end{gather*}
We deduce that (\ref{con1}) is true since we have $\binom{n}{2}+n\allowbreak
=\allowbreak\binom{n+1}{2}.$
\end{proof}

\end{document}